
 \documentclass[final,11pt, times, linenumber, nopreprintline]{elsarticle}

\usepackage[margin=2.5cm]{geometry}

\usepackage{framed,multirow}

\usepackage{csquotes}
\usepackage{amsmath}
\allowdisplaybreaks
\usepackage{amssymb}
\usepackage{commath}
\usepackage{mathtools}
\usepackage{bbm}
\usepackage{amsthm}
 \newtheorem{example}{Example}
\usepackage{relsize}
\usepackage{adjustbox}
\usepackage{algorithm}
\usepackage[noend]{algpseudocode}
\usepackage{booktabs}
\usepackage{tikz}
\usepackage{bm}
\usepackage{soul}
\usepackage{lineno}

\usepackage{url}
\usepackage{hyperref}

\DeclareMathOperator\supp{supp}


\newcommand{\N}{\mathbb{N}}
\newcommand{\R}{\mathbb{R}}

\makeatletter
\newcommand*\dashline{\rotatebox[origin=c]{90}{$\dabar@\dabar@\dabar@$}}
\makeatother
\def\TTV{\text{TTV}}

\DeclarePairedDelimiterX\newset[1]\lbrace\rbrace{\setaux #1||\endsetaux}
\def\setaux#1|#2|#3\endsetaux{\if\relax\detokenize{#2}\relax #1 \else #1 \;\delimsize\vert\; #2 \fi}

\let\epsilon\varepsilon
\let\phi\varphi
\let\rho\varrho

\newcommand{\dt}{\Delta t}

\newcommand{\bA}{\bm A}
\newcommand{\dx}{\operatorname{d}\!x}
\renewcommand{\dt}{\operatorname{d}\!t}

\newcommand{\bu}{ u}
\newcommand{\bU}{\bm U}

\newcommand{\bg}{ g}

\newcommand{\bF}{\bar{F}}
\renewcommand{\bf}{ f}

\newtheorem{thm}{Theorem}
\newtheorem{lem}[thm]{Lemma}

\newdefinition{rmk}{Remark}
\newproof{pf}{Proof}
\newproof{pot}{Proof of Theorem \ref{thm2}}
\newdefinition{dfn}{Definition}


\begin{document}


\begin{frontmatter}
	
\title{The Lax--Wendroff theorem for Patankar-type methods applied to hyperbolic conservation laws\footnote{%
		© 2025 The Author(s). Published in \textit{Computers \& Fluids} (Elsevier) under the Creative Commons Attribution 4.0 International License (CC BY 4.0). 
		DOI: \href{https://doi.org/10.1016/j.compfluid.2025.106885}{10.1016/j.compfluid.2025.106885}. 
		License: \href{https://creativecommons.org/licenses/by/4.0/}{creativecommons.org/licenses/by/4.0/}.
}}

\author[1]{Janina {Bender}\corref{cor1}}
\cortext[cor1]{Corresponding author:}
\ead{jmbender@mathematik.uni-kassel.de}
\author[1]{Thomas {Izgin}}
\author[3]{Philipp {Öffner}\corref{cor3}}
\author[4]{Davide {Torlo}}

\affiliation[1]{organization={Institute of Mathematics, University of Kassel},
addressline={Heinrich-Plett-Str. 40}, 
postcode={34132},
city={Kassel},
country={Germany}}
\affiliation[3]{organization={Institute of Mathematics, Clausthal University of Technology}, 
city={Clausthal-Zellerfeld}, 
country={Germany}}
\affiliation[4]{organization={Dipartimento di Matematica Guido Castelnuovo, Università di Roma La Sapienza}, 
addressline={piazzale Aldo Moro, 5}, 
city={Roma}, 
postcode={00185}, 
country={Italy}}


\begin{abstract}

For hyperbolic conservation laws, the famous Lax--Wendroff theorem delivers sufficient conditions for the limit of a convergent numerical method to be a weak (entropy) solution.
This theorem is a fundamental result, and many investigations have been done to verify its validity for finite difference, finite volume, and finite element schemes, using either explicit or implicit linear time-integration methods.

Recently, the use of modified Patankar (MP) schemes as time-integration methods for the discretization of hyperbolic conservation laws has gained increasing interest. These schemes are unconditionally conservative and positivity-preserving and only require the solution of a linear system. However, MP schemes are by construction nonlinear, which is why the theoretical investigation of these schemes is more involved. 
We prove an extension of the Lax--Wendroff theorem for the class of MP methods. This is the first extension of the Lax--Wendroff theorem to nonlinear time integration methods with just an additional hypothesis on the total time variation boundedness of the numerical solutions. We provide some numerical simulations that validate the theoretical observations.


%
\end{abstract}
\begin{keyword}
Local conservation\sep Implicit Lax--Wendroff theorem\sep Total time variation\sep Patankar-type schemes
\MSC   65M06 \sep 
65M20 \sep 
65L06 
\end{keyword}
\end{frontmatter}


\section{Introduction}


In the context of hyperbolic conservation laws, the development of high-order, structure-preserving methods has attracted a broad attention. These methods are essential to ensure physically reliable solutions satisfying suitable constraints. One such constraint is the positivity of certain quantities, such as density in the Euler equations of gas dynamics or the water height in the context of shallow water flows where positivity preservation is usually obtained within a numerical method by a positivity-preserving reconstruction/limiting technique and an additional time step constraint,  see for instance \cite{zbMATH07745801, RB2009, ABBKP2004}. 
To overcome the limitation of time step constraints without solving a completely nonlinear system and to guarantee the preservation of positivity, modified Patankar (MP) schemes have recently been applied, see e.g. \cite{ciallella2022arbitrary,huang2019third,ciallella2025high}. These schemes, designed for production-destruction systems (PDS), are nonlinear, they preserve positivity unconditionally, and are conservative \cite{burchard_high-order_2003, offner2020arbitrary}. 
Moreover, due to their specific structure, only linear equations need to be solved. Applying them as time-integration schemes for hyperbolic problems has yielded promising results, as outlined in recent works \cite{MO2014,Ortleb2014, OrtlebH2017}.
For instance in \cite{ciallella2022arbitrary,ciallella2025high}, the MP Deferred Correction (MPDeC) method from \cite{offner2020arbitrary} is used to ensure the positivity of the water height in the shallow water equations discretized with finite volumes in space.
In \cite{MO2014,Ortleb2014}, it is also ensured with a Patankar-type method, where a discontinuous Galerkin (DG) method is used for space discretization. 
However, the theoretical background of these schemes has not yet been sufficiently investigated, especially concerning their integration into the existing convergence theory for numerical methods of hyperbolic systems. 
A cornerstone in the numerical treatment of hyperbolic conservation laws within this framework is the well-known Lax--Wendroff theorem \cite{lax_systems_1960}. 
This theorem establishes sufficient conditions for the convergence of numerical approximations to weak (entropy) solutions of the underlying nonlinear hyperbolic conservation law. 
Much of the literature, see e.\,g.\ \cite{abgrall2023, eymard2022finite, kuzmin2022limiter,shi_2018_local, GRP_2024, arXiv:2308.14872,Weak_FV_2019,SS2018,zbMATH07745801}, has been focusing the on spatial discretization (even with unstructured grids and finite element (FE) methods), mostly using 
linear explicit time integration methods.
One exception is given in \cite{birken_conservation_2022, LB24}, where a Lax--Wendroff theorem was proven for implicit Runge--Kutta methods using explicit pseudo-time Runge--Kutta schemes for approximating the solution to the occurring (non-)linear system. 
However, here only finitely many pseudo-time steps are allowed to approximate the update of the difference scheme. 
Given a fixed spatial discretization of $N$ points, this is not a problem for linear implicit systems, as suitable Krylov subspace methods converge to the exact solution in $N$ iterations.
Problems arise when we are interested in the convergence towards the exact solution for $N\to \infty$.
In this case, the previous result does not directly apply, leaving some gaps in the proof of the Lax-Wendroff theorem for the implicit setting, either linear or nonlinear.
Nevertheless, a first step towards a deeper understanding is given in \cite{LB23_Krylov_consistency}. 

In this paper, we propose a different ansatz introducing the notion of total time variation and we shift the focus to \textit{nonlinear} time-integration schemes extending the convergence theory of Lax and Wendroff to Patankar-type methods. Altogether, the extension of the Lax--Wendroff theorem additionally assumes that the total variation in the time variable (see Definition~\ref{def:TTV}) is uniformly bounded.
This extension allows us, for the first time, to establish a theory of their convergence in the context of partial differential equations (PDEs). The theoretical results are validated through numerical tests for different conservation laws.

The rest of the paper is organized as follows: In Section~\ref{Se_preliminaries}, we introduce the concept of consistency and convergence for difference schemes for conservation laws and repeat the basic formulation of the Lax--Wendroff theorem which can be found nowadays in many excellent lecture books like \cite{leveque_numerical_1992, kroner_numerical_1997,godlewski_numerical_1996}.
Afterwards, in Section~\ref{se_Patankar}, MP schemes will be introduced for production-destruction systems in context of ordinary differential equations (ODE). Here, the recent interpretation as  non-standard additive Runge--Kutta (NSARK) methods \cite{NSARK} will also be used.
Then, we explain how to employ the MP ideas within difference schemes (finite volume (FV), finite differences, for example with Weighted Essentially Non-Oscillatory (WENO) reconstruction) to guarantee positivity preservation. 
In Section~\ref{se_Main}, we demonstrate the extension of the classical Lax--Wendroff Theorem for a large class of NSARK methods, including MP difference schemes. 
Here, we focus on the first order version (Theorem~\ref{th_Main_MP}) but we give also a sketch of the proof for the extension to higher-order. 
In Section~\ref{se_numerics}, we validate our theoretical findings by some numerical experiments.
In  particular, we demonstrate that we are unconditionally positivity-preserving and we converge to a weak solution. 
In Section~\ref{se_conclusion}, we summarize the work, we derive conclusions and give perspectives for future works. 
Finally, in \ref{sec:TVD}, we demonstrate how the total variation, which is an essential aspect of the Lax--Wendroff theorem, can in some specific frameworks be proven to be diminishing for the studied MP schemes. 

\section{Consistency and convergence for difference schemes for conservation laws }\label{Se_preliminaries}
To transfer the convergence theory of the Lax--Wendroff theorem to Patankar-type methods, we first summarize the theoretical background of the classical theory. We consider a one-dimensional, scalar Cauchy problem
\begin{equation}
	\partial_t \bu(x,t) + \partial_x f(u(x,t)) = 0, \quad \bu(x,0)=\psi(x)\label{eq:InitProb},
\end{equation}
with $\bu\colon \R\times \R^+_0\to \R$ and $f,\psi\colon \R\to \R$, where $f$ denotes the physical flux function.

We examine difference schemes of the form
\begin{equation}
	U_i^{n+1} = U_i^n - \frac{\Delta t}{\Delta x}\left(\bg_{i+\frac{1}{2}}^n - \bg_{i-\frac{1}{2}}^n \right)
	\label{eq:diffscheme}
\end{equation}
to find approximations $U_i^n \in \R$ to the solution $\bu(x_i,t_n)$ at $N$ discrete grid points $\lbrace x_i\rbrace_{i=1}^N$ and at time points $\lbrace t_n \rbrace_{n=0}^{N_t}$. 
Here, the time step of a given grid is denoted by $\Delta t = t_{n+1}-t_n$ for all $n$ and the spatial mesh size by $\Delta x=x_{i+1}-x_{i}$ for all $i$. Furthermore, we have
\begin{equation}
	\bg_{i+ \frac{1}{2}}^n = \bg(U^n_{i-p}, U^n_{i-p+1}, \dots, U^n_{i+q}),
	\label{eq:numflux}
\end{equation}
where $\bg$ is a continuous function, depending on $p + q + 1$ arguments, called the \textit{numerical flux}. The following results and definitions hold for both explicit and implicit numerical fluxes \cite[p. 44]{kroner_numerical_1997}, but for simplicity, we only consider explicit schemes. First, we note the main properties of these fluxes.
 
\begin{dfn}[Consistency]
	\label{def:cons}
The numerical flux function $\bg$ from \eqref{eq:numflux} is \textit{consistent} to the physical flux function $\bf$, if
\begin{enumerate}[i.]
\item  $g(\bar{\bu},\dots,\bar{\bu}) = f(\bar{\bu})$ for any $\bar{u}$ and
\item $\bg$ is locally Lipschitz continuous in every argument, that is if it exists a $K \in \R$ such that

$|\bg(U^n_{i-p}, U^n_{i-p+1}, \dots, U^n_{i+q})-\bf(\bar{\bu})| \leq K \max_{-p \leq j \leq q} |U^n_{i+j} - \bar{\bu}|$ holds
for $U^n_{i+j}$ sufficiently close to $\bar{\bu}$.
\end{enumerate}
\end{dfn}
If $g$ is consistent, then we also say that the difference scheme \eqref{eq:diffscheme} is consistent with the PDE \eqref{eq:InitProb}.

To ensure that the limit of a converging difference scheme is a weak solution of the system, \textit{Lax} and \textit{Wendroff} \cite{lax_systems_1960} proved the following theorem. Here, we present a formulation with slightly different requirements.
%
%
%
%
\begin{thm}[Lax--Wendroff \cite{leveque_numerical_1992}]
	Consider a sequence of step sizes $\Delta x_{\{l\}} > 0$ with  $ \lim_{l{ \rightarrow \infty}} \Delta x_{\{l\}} = 0$. Let $\left( U^{\{l\}} \right)_l $ be a sequence of piece-wise constant functions constructed from numerical solutions obtained by a difference scheme consistent with \eqref{eq:InitProb} with time step size $\Delta t_{\{l\}} = \lambda \Delta x_{\{l\}}, \lambda \equiv \mathrm{const}.$ If  for any domain $[a,b]\times [0,T]$  the total variation $TV \left(U^{\{l\}}(\cdot, t)\right)$ is uniformly bounded for all $l$ and $t \in [0,T]$ and if $U^{\{l\}}\xrightarrow{l \rightarrow \infty} u$ in $L^1_{\mathrm{loc}}(\R\times [0,T))$, then $u$ is a weak solution of the conservation law \eqref{eq:InitProb}.
	\label{thm:LW}
\end{thm}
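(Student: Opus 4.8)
The plan is to follow the classical strategy: test the difference scheme \eqref{eq:diffscheme} against a smooth compactly supported function, transfer the discrete differences onto the test function by summation by parts (Abel summation), and then pass to the limit $l\to\infty$ using the $L^1_{\mathrm{loc}}$ convergence together with the uniform bound on $TV$. Concretely, fix $\varphi\in C_0^\infty(\R\times[0,T))$ and set $\varphi_i^n:=\varphi(x_i,t_n)$. Multiplying \eqref{eq:diffscheme} by $\Delta x\,\varphi_i^n$ and summing over all $i$ and all $n\ge 0$ yields an identity which, after Abel summation in $n$ (producing a boundary term at $n=0$ that carries the initial data $U_i^0$, with no contribution near $t=T$ since $\varphi$ is supported in $[0,T)$) and in $i$ (with no boundary term, by compact support of $\varphi$ in $x$), can be rearranged to
\begin{equation*}
\Delta x\,\Delta t\sum_{n\ge 1}\sum_i \frac{\varphi_i^n-\varphi_i^{n-1}}{\Delta t}\,U_i^n
+\Delta x\,\Delta t\sum_{n\ge 0}\sum_i \frac{\varphi_{i+1}^n-\varphi_i^n}{\Delta x}\,g_{i+\frac12}^n
+\Delta x\sum_i \varphi_i^0\,U_i^0 = 0 .
\end{equation*}
The goal is to show that, in the limit $l\to\infty$, this becomes the weak formulation $\iint_{\R\times\R^+_0}\big(u\,\partial_t\varphi+f(u)\,\partial_x\varphi\big)\,\dx\dt+\int_\R \psi(x)\varphi(x,0)\,\dx=0$.

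First I would observe that the uniform $TV$ bound on each interval $[a,b]$, combined with the $L^1_{\mathrm{loc}}$ convergence (hence $L^1_{\mathrm{loc}}$ boundedness) of $U^{\{l\}}$, gives a uniform $L^\infty$ bound for $U^{\{l\}}$ on every compact set, which will be used repeatedly as a dominating bound. Then the two ``easy'' terms are treated by standard Riemann-sum arguments: since $\varphi$ is smooth, $\frac{\varphi_i^n-\varphi_i^{n-1}}{\Delta t}\to\partial_t\varphi$ and $\frac{\varphi_{i+1}^n-\varphi_i^n}{\Delta x}\to\partial_x\varphi$ uniformly on $\supp\varphi$, so using $U^{\{l\}}\to u$ in $L^1_{\mathrm{loc}}$ the first double sum converges to $\iint u\,\partial_t\varphi$, and, since $U_i^0$ is a consistent sampling/averaging of $\psi$, the last sum converges to $\int_\R\psi(x)\varphi(x,0)\,\dx$.

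The delicate term is the flux sum. I would first replace $g_{i+\frac12}^n$ by $f(U_i^n)$: by consistency (Definition~\ref{def:cons}), $g(U_i^n,\dots,U_i^n)=f(U_i^n)$ and $g$ is locally Lipschitz, so $|g_{i+\frac12}^n-f(U_i^n)|\le K\max_{-p\le j\le q}|U_{i+j}^n-U_i^n|$, where $K$ is uniform on the compact range furnished by the $L^\infty$ bound. Telescoping gives $\sum_i\max_{-p\le j\le q}|U_{i+j}^n-U_i^n|\le (p+q)\,TV\!\left(U^{\{l\}}(\cdot,t_n)\right)$; since $|\varphi_{i+1}^n-\varphi_i^n|/\Delta x$ is bounded and supported on $O(1/\Delta x)$ cells and on $O(1/\Delta t)$ time levels, the total contribution of this replacement is $O(\Delta x)\to 0$ precisely because $TV$ is uniformly bounded. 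It then remains to show $\Delta x\,\Delta t\sum_n\sum_i\frac{\varphi_{i+1}^n-\varphi_i^n}{\Delta x}\,f(U_i^n)\to \iint f(u)\,\partial_x\varphi$, which follows once $f(U^{\{l\}})\to f(u)$ in $L^1_{\mathrm{loc}}$: extract an a.e.-convergent subsequence of $U^{\{l\}}$, use continuity of $f$ and the uniform $L^\infty$ bound (dominated convergence), and note that the limit does not depend on the subsequence. Collecting the three limits gives the weak formulation for every test function $\varphi$, hence $u$ is a weak solution of \eqref{eq:InitProb}.

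I expect the main obstacle to be exactly this flux term, and more precisely the step that trades the multi-point numerical flux $g_{i+\frac12}^n$ for the single-point value $f(U_i^n)$: this is where the uniform total-variation hypothesis is indispensable, since without control of the spatial oscillations of $U^{\{l\}}$ the error of this replacement need not vanish as $\Delta x\to0$. Everything else reduces to routine Riemann-sum and dominated-convergence bookkeeping once the uniform bounds are in hand.
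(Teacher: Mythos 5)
Your proposal is correct and follows essentially the standard Lax--Wendroff argument: the paper itself does not reprove Theorem~\ref{thm:LW} (it cites \cite{leveque_numerical_1992}), but the same skeleton --- test function, summation by parts as in \eqref{eq:disc_prf_lw}, and the consistency/Lipschitz estimate $|g^n_{i+\frac12}-f(U_i^n)|\le K\max_j|U_{i+j}^n-U_i^n|$ controlled by the uniform TV bound --- is exactly what appears inside the paper's proof of Theorem~\ref{th_Main_MP}. The only cosmetic difference is that you bound the flux-replacement error in an integrated $O(\Delta x)$ fashion via the telescoping estimate, whereas the paper argues pointwise a.e.\ convergence of the flux; both are standard and valid.
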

\begin{rmk} The Lax--Wendroff theorem \ref{thm:LW} guarantees that the limit of a convergent conservative numerical scheme is a weak solution of the conservation law. Furthermore, if the approximated solutions also satisfy a discrete form of the entropy inequality, then the limit corresponds to the weak entropy solution. Therefore, the theorem remains applicable in identifying the entropy solution, provided the entropy condition is met. Therefore, applying entropy stable schemes which fulfill the Lax--Wendroff theorem yield weak entropy solutions \cite{Tadmor2003,CF2013,fisher20213,godlewski_numerical_1996}. 
\end{rmk}
The main idea of this paper is to extend Theorem \ref{thm:LW} if MP schemes (therefore nonlinear time-integration) schemes are used inside the discretization. We shortly introduce now the concept of MP schemes in the following section.

\section{Patankar-type schemes}\label{se_Patankar}
\subsection{Modified Patankar  for Production-Destruction Systems}
Patankar-type schemes are introduced to solve systems of ODE problems
\begin{equation}
	 \partial_t\mathbf{u}(t) = \Phi(\mathbf{u}(t)), \quad \mathbf{u}(t_0) = \mathbf{u}^0,
\end{equation}
with $\mathbf{u}: \R_0^+ \rightarrow \R^N$ and $\Phi: \R^N \rightarrow \R^N$.
Similar to PDE problems, we denote the $i$-th component of an approximate solution obtained by a numerical scheme at time step $t_n$ as $U^n_i$, while $\bU^n$ denotes the whole solution vector.
As mentioned in the introduction, MP schemes are constructed to preserve positivity and conservativity, properties that are defined as follows.
\begin{dfn}
	An ODE scheme is called \textit{unconditionally positive} if $\bU^n>0$ implies $\bU^{n+1}>0$ for all $n\in\N_0$ and $\Delta t > 0$. Furthermore it is \textit{unconditionally conservative} if $\sum_{i=1}^{N} U_i^{n+1}=\sum_{i=1}^{N} U_i^n $ for all $n \in \N_0$ and $\Delta t > 0$.
\end{dfn}
Throughout the paper, inequalities involving vectors, such as $\bU^n>0$, are meant componentwise for all components.
Consider an ODE written as a PDS
\begin{equation}
\partial_t u_i(t)=\sum_{j=1}^N p_{ij}(\mathbf u(t))-\sum_{j=1}^N d_{ij}(\mathbf u(t)), \quad i=1,\ldots,N 
\label{eq:PDS}
\end{equation}
with $p_{ij},d_{ij}$ non-negative production and destruction terms for $\mathbf{u}\geq 0$. In the past, such PDSs were commonly used to describe \textit{inter alia} biological reactions as in \cite{burchard_high-order_2003}. In later works, PDSs were also used to describe the semi-discretization of hyperbolic balance laws as in \cite{offner2020arbitrary, MO2014}. For both contexts it is important to have conservative and unconditionally positive schemes. Therefore, it is necessary to denote these properties also for PDSs.
\begin{dfn}
	A PDS \eqref{eq:PDS} is said to be \textit{non-negative} if, for an initial value $u_i(0)\geq 0$, $u_i(t)\geq 0$ follows for all $t > 0$ and $i = 1, \dots, N$. If the strict inequalities hold, the PDS is called \textit{positive}.
	\end{dfn}

\begin{dfn}
	The PDS \eqref{eq:PDS} is called \textit{conservative} if $p_{ij}(\mathbf u)=d_{ji}(\mathbf u)$ for all $i,j=1, \dots, N$ and \textit{fully conservative} if additionally $p_{ii}(\mathbf u) = d_{ii}(\mathbf u) = 0$ for all $i=1,\dots,N$.
\end{dfn}
We remark that a conservative and a fully conservative PDS are analytically the same model, but treating the production and destruction matrices numerically, it is safer to choose the fully conservative one. Hereafter, all methods are supposed to be applied to a fully conservative and positive PDS.
Based on such a conservative and positive PDS we can apply an explicit Runge--Kutta (RK) method and use the modified Patankar-trick to obtain a modified Patankar--Runge--Kutta (MPRK) scheme. 
	\begin{dfn}
	Given an explicit $s$-stage RK scheme with non-negative Butcher tableau $\arraycolsep=1.4pt\def\arraystretch{1.15}\begin{array}{c|c}
	\mathbf{c} & \mathbf{A} \\
	\hline
			   & \mathbf{b}^T
	\end{array}$, i.e., $\mathbf A \geq 0$ and $\mathbf b \geq 0$, applied to a fully conservative PDS \eqref{eq:PDS},
	the method
	\begin{align}
		U_i^{(k)} &= U_i^n + \Delta t\sum_{\nu=1}^{k-1} a_{k\nu}
		\sum_{j=1}^N\biggl( p_{ij}(\mathbf{U}^{(\nu)})\frac{U_j^{(k)}}{\pi^{(k)}_j}-d_{ij}(\mathbf{U}^{(\nu)})\frac{U_i^{(k)}}{\pi^{(k)}_i}\biggr), \quad k=1,\dotsc,s\label{eq:MPRK_stages}\\	
        U_i^{n+1}&=U_i^n+\Delta t \sum_{k=1}^s b_k
		\sum_{j=1}^N\biggl( p_{ij}(\mathbf{U}^{(k)})\frac{U_j^{n+1}}{\sigma_j}-d_{ij}(\mathbf{U}^{(k)})\frac{U_i^{n+1}}{\sigma_i}\biggr)\label{eq:MPRK_final_update}
	\end{align}
	is called \emph{MPRK scheme} if the Patankar weight denominators (PWDs) $\pi_i^{(k)}$ and $\sigma_i$ are unconditionally positive, $\pi_i^{(k)}=\pi_i^{(k)}(\bU^n,\bU^{(1)}, \dots, \bU^{k-1})$ is independent of $\mathbf U^{(k)}$, and $\sigma_i=\sigma_i(\bU^n,\bU^{(1)}, \dots,\bU^{(s)})$ is independent of $\mathbf U^{n+1}$.
	\label{def:MPRK}
\end{dfn}
	The huge advantage of such methods is that, given PWDs, MPRK schemes are unconditionally positive and conservative scheme, see for instance \cite{kopecz_order_2018}, while they merely require the solution of $s$ linear systems
\begin{align*}
	\bU^{(1)}&=\bU^n,\\
	\mathbf{M}^{(k)}\; \bU^{(k)}&=\bU^n,\quad k=2,\dots,s,\\
	\mathbf{M} \; \bU^{n+1}&=\bU^n,
\end{align*}
all of which are well-posed having a unique and positive solution. We note that MPRK schemes can also be devised for non-conservative PDS, see e.g. \cite{TOR2022,IzginThesis}, which is relevant when applying these schemes in the context of PDEs with non-periodic boundary conditions.

\begin{example}[Modified Patankar Euler]
As an example, we illustrate here the first order modified Patankar Euler (MPE) time discretization of a conservative PDS.
In this case, there is only the final update \eqref{eq:MPRK_final_update} where $\sigma_i=U^n_i$ for all $i=1,\dots,N$. The scheme reads
\begin{equation*}
        U_i^{n+1}=U_i^n+\Delta t \sum_{j=1}^N\biggl( p_{ij}(\mathbf{U}^{n})\frac{U_j^{n+1}}{U_j^{n}}-d_{ij}(\mathbf{U}^{n})\frac{U_i^{n+1}}{U_i^{n}}\biggr).\label{eq:MPE_final_update}
\end{equation*}
Clearly, each time step includes a linear system to be solved, with a matrix defined by
\begin{equation*}\label{eq:matrix_MPE}
    M_{ij}= \begin{cases}
        -\Delta t \frac{p_{ij}(\mathbf{U}^n)}{U_j^n} & \text{if } i\neq j,\\
        1+\sum_{k\neq i} \Delta t \frac{d_{ik}(\mathbf{U}^n)}{U_k^n} & \text{if } i= j.
    \end{cases}
\end{equation*}
This scheme will be the prototype for high order methods and we will use it to define the successive examples.
\end{example}

After the first introduction of MPRK methods based on explicit first- and second-order RK schemes in \cite{burchard_high-order_2003}, the Patankar-trick has since been successfully extended to a variety of RK frameworks, including state-of-the-art strong-stability-preserving Runge–Kutta (SSPRK) methods \cite{shu_efficient_1988} resulting in MPSSPRK schemes \cite{huang_positivity-preserving_2019}, and arbitrary high-order Deferred Correction (DeC) schemes \cite{dutt_spectral_2000} yielding MPDeC methods \cite{ciallella2022arbitrary}. It has also been applied to multistep methods in a recent work \cite{IMPV2025}. 
 Further positivity-preserving schemes are Geometric Conservative (GeCo) \cite{MCD2020} and generalized BBKS (gBBKS) \cite{AKM2020}. To give the main ideas for the extension of the Lax--Wendroff theorem to all these schemes and many more, we use the environment of NSARK methods as introduced in \cite{IzginThesis} to obtain results that are as universal as possible.

Therefore, first recall the definition of additive Runge--Kutta (ARK) methods. These are constructed based on an additive splitting
$$\frac{\mathrm{d}}{\dt} \mathbf{u}(t)  = \sum_{\substack{\nu=1}}^N  \mathbf{F}^{[\nu]}(\mathbf{u}(t)), \,\,\,  \mathbf{u}(0)  = \mathbf{u}^0\in \R^d$$
 of the right hand side of the ODE.
Thus, several schemes given by an extended Butcher tableau
		\[\arraycolsep=1.4pt\def\arraystretch{1.5}
		\begin{array}{c|c|c|c|c}
			\mathbf c &	\bA^{[1]}        &    \bA^{[2]}  & \cdots & \bA^{[N]} \\ \hline
			&	\mathbf b^{[1]}         &    \mathbf b^{[2]}  & \cdots & \mathbf b^{[N]} 
		\end{array},
		\]
		can be applied to the different summands leading to a scheme of the form
	\begin{align*}
		\bU^{(k)}& = \bU^n + \Delta t \sum_{r=1}^s  \sum_{\substack{\nu=1}}^N a^{[\nu]}_{kr} \mathbf{F}^{[\nu]}(\bU^{(r)}), \quad k=1,\dotsc,s,\\
		\bU^{n+1} & = \bU^n + \Delta t \sum_{r=1}^s \sum_{\substack{\nu=1}}^N b^{[\nu]}_r \mathbf{F}^{[\nu]}(\bU^{(r)}).
\end{align*}
Notice that these Runge--Kutta coefficients are fixed for every summand.
Now, the idea of non-standard additive Runge--Kutta (NSARK) methods is to allow a dependency on both solution and stepsize in the coefficients, which read as
	\[\arraycolsep=1.4pt\def\arraystretch{1.5}
	\begin{array}{c|c|c|c|c}
		\mathbf c &	\bA^{[1]}(\hat{\bU}^n,t_n,\Delta t)         &    \bA^{[2]}(\hat{\bU}^n,t_n,\Delta t)  & \cdots & \bA^{[N]}(\hat{\bU}^n,t_n,\Delta t) \\ \hline
		&	\mathbf b^{[1]}(\hat{\bU}^n,t_n,\Delta t)        &    \mathbf b^{[2]}(\hat{\bU}^n,t_n,\Delta t)  & \cdots & \mathbf b^{[N]}(\hat{\bU}^n,t_n,\Delta t) 
	\end{array},
	\]
where $\hat{\bU}^n=\left(\bU^n \dashline\bU^{(1)}\dashline\dotsc\dashline\bU^{(s)}\dashline \bU^{n+1}\right)$,	resulting in
\begin{align*}
	\bU^{(k)} & = \bU^n + \Delta t \sum_{r=1}^s  \sum_{\substack{\nu=1}}^N a^{[\nu]}_{kr}(\hat{\bU}^n,t_n,\Delta t)  \mathbf{F}^{[\nu]}(\bU^{(r)}), \quad k=1,\dotsc,s,\\
	\bU^{n+1} & = \bU^n + \Delta t \sum_{r=1}^s \sum_{\substack{\nu=1}}^N b^{[\nu]}_r(\hat{\bU}^n,t_n,\Delta t)  \mathbf{F}^{[\nu]}(\bU^{(r)}).
	\end{align*}
As introduced in equation \eqref{eq:PDS}, we focus on ODEs which can be split into a PDS. So we can understand such a right-hand side (RHS) as
\[F^{[\nu]}_i(\bU(t))=\begin{cases}p_{i\nu}(\bU(t)), &\nu\neq i,\\-\sum_{j=1}^Nd_{ij}(\bU(t)), &\nu=i \end{cases}\]
and interpret a Patankar-type method as NSARK method with 
\begin{equation}\label{eq:NSW}
    \begin{aligned}
        a_{kr}^{[\nu]}(\hat{\bU}^n,t_n,\Delta t)&=a_{kr}\gamma_{\nu}^{(k)}(\hat{\bU}^n,t_n,\Delta t),\\
        b_{r}^{[\nu]}(\hat{\bU}^n,t_n,\Delta t)&=b_{r}\delta_{\nu}(\hat{\bU}^n,t_n,\Delta t),
    \end{aligned}
\end{equation} 
where the Patankar-trick is incorporated into the \textit{non-standard weights (NSWs)} $\gamma_{\nu}^{(k)}, \delta_\nu$.
In particular, we obtain the MPRK methods introduced in Definition \ref{def:MPRK} by choosing the NSWs
\[\gamma_{\nu}^{(k)}=\frac{U_\nu^{(k)}}{\pi_\nu^{(k)}}, \  \delta_{\nu}=\frac{U_\nu^{n+1}}{\sigma_\nu}.\]
Note that we use \enquote{$()$} in the upper indices whenever we refer to a stage while \enquote{$[]$} is used to refer to a different RK scheme used within the ARK framework.

\subsection{Modified Patankar for difference schemes}

The key properties of Patankar-type methods, being unconditionally positivity-preserving and conservative, can be a huge advantage when used as the time integration scheme of the semidiscretization of a hyperbolic PDE. 
For instance, they can guarantee to produce nonnegative solutions when approaching zero. 
This is in contrast with almost all classical time discretizations where this bound can either be obtained with constraints on the time-step or it cannot be obtained at all. 
Negative solutions in such contexts can lead not only to unphysical solutions, but also to NaN in the code and stop the execution.
The only major exception is the implicit Euler method, which is unfortunately only first order accurate and very dissipative for hyperbolic conservation laws.

To apply Patankar-type time integration methods to semidiscrete hyperbolic PDEs, it is necessary to find and adequate splitting of the obtained ODE into a PDS. After that, the production and destruction terms are properly weighted. Note that such a splitting is not unique, see Remark~\ref{rem:other_splittings}. 

The transfer to Patankar-type difference schemes is done along the difference scheme formulation as originally introduced in \cite{lax_systems_1960}. That is, a flux formulation of the RHS treated by an explicit Euler scheme in time, which in our context is associated with the MPE scheme. Thus, the modification of the numerical fluxes is the key point of this extension.
Therefore, we first introduce an explicit conservative PDS based on a classical numerical flux $g$ as in \eqref{eq:diffscheme} and add the corresponding weights. These are kept in a general notation to clarify the later extension to general Patankar-type methods with more stages.

Starting with the PDS, we interpret the fluxes at the cell boundaries as production or destruction terms according to their signs, i.e., 
\begin{equation}
\begin{aligned}
	p_{i,i+1}(\bU^{n}) &= d_{i+1,i}(\bU^{n}) := -\min\bigl\{g_{i+\frac{1}{2}}^n,0\bigr\}, \\
	d_{i,i+1}(\bU^{n}) &= p_{i+1,i}(\bU^{n}) := \max\bigl\{g_{i+\frac{1}{2}}^n,0\bigr\},
	\label{eq:prod_dest}
    \end{aligned}
\end{equation}
where $\bU^n$ denotes the vector containing $U_i^n$ for $i=1, \dots, N$. 
With that, we can set $p_{i,j}, d_{i,j}$ to zero for $j \notin \{i-1,i+1\}\subseteq\{1,\dotsc,N\}$.
Now, we consider a one-stage Patankar-type method interpreted as a non-standard additive Runge--Kutta (NSARK) method with non-standard weights (NSWs) $\delta_\nu = \delta_\nu(\hat\bU^n,t_n, \Delta t)$ \cite{IzginThesis}. We write it in terms of this PDS in difference form:
\begin{equation}
	\begin{split}
		U_i^{n+1} &= U_i^n + \Delta t \sum_{j = 1}^N \frac{1}{\Delta x}\biggl(  p_{i,j}(\bU^n)\delta_j - d_{i,j}(\bU^n)\delta_i \biggr) \\
		&= U_i^n + \frac{\Delta t}{\Delta x} \biggl(p_{i,i+1}(\bU^{n})\delta_{i+1} - d_{i,i+1}(\bU^{n})\delta_i + p_{i,i-1}(\bU^{n}) \delta_{i-1} - d_{i,i-1}(\bU^{n})\delta_i \biggr),
	\end{split}
	\label{eq:MPRK_difference}
\end{equation}
where we use the conservation property of the PDS, i.e., $p_{i,j}=d_{j,i}$, leading to the difference scheme formulation 
\begin{equation*}
    U^{n+1}_i = U^n_i - \frac{\Delta t}{\Delta x} \left( F^{\delta}_{g_{i+\frac{1}{2}}}(\bU^n) -F^{\delta}_{g_{i-\frac{1}{2}}}(\bU^n) \right)
\end{equation*}
with the new numerical flux
\begin{equation}
	F^{\delta}_{g_{i+\frac{1}{2}}}(\bU^n):= d_{i,i+1}(\bU^n) \delta_i - p_{i, i+1}(\bU^n) \delta_{i+1} = \max \lbrace g^{n}_{i+\frac12},0 \rbrace \delta_i + \min \lbrace g^n_{i+\frac12},0\rbrace  \delta_{i+1} 
	\label{eq:flux_one_st}
\end{equation}
and the short notation $F^{n,\delta}_{g_{i+\frac{1}{2}}} = F^{\delta}_{g_{i+\frac{1}{2}}}(\bU^n).$
As mentioned earlier, the NSWs $\delta_\nu$ depend on the particular method. For example, considering an MPRK scheme as in \cite{kopecz_order_2018}, $\delta_\nu$ represent the corresponding Patankar weights, which are $\delta_i=\frac{U_i^{n+1}}{U_i^n}$ for the MPE. 

\begin{rmk}[Non uniqueness of the splitting]\label{rem:other_splittings} First of all, with the construction \eqref{eq:prod_dest}, there always exists a splitting of the conservation law into a conservative PDS. However, it should be noted that the splitting in production and destruction terms is not unique as, in general, one could add a positive constant to all production and destructions $p^{\text{new}}_{ij}\coloneqq p^{\text{old}}_{ij} + c$ for all $i\neq j$, and $d^{\text{new}}_{ij}\coloneqq p^{\text{new}}_{ji}$, leading to another valid splitting of the original PDS for any $c\geq 0$.

	Furthermore, there might be even multiple non-trivial splittings, e.\,g., for the inviscid Burgers' equation using an upwind FV discretization, where $U_i\geq 0, \,f(U_i)=\frac12 U_i^2 \geq 0$, we obtain
	\begin{equation*}
		\begin{aligned}
		\frac{\mathrm{d} U_i}{\mathrm{dt}} =&-\frac{1}{\Delta x}\left( f(U_{i})-f(U_{i-1})\right)	=
		\frac{1}{\Delta x}f(U_{i-1})-\frac{1}{\Delta x}f(U_i)\\
		=&\hphantom{+}\frac{1}{\Delta x}\left( -(f(U_{i})-f(U_{i-1}))^-+f(U_i)\right) \; -\frac{1}{\Delta x}(f(U_i)-f(U_{i-1}))^+\; \\
		&+\frac{1}{\Delta x}\left( (f(U_{i+1})-f(U_i))^-
		 -f(U_{i+1})\right) + (f(U_{i+1})-f(U_i))^+,
				\end{aligned}
	\end{equation*}
	where we introduced $\tau^+\coloneqq \max(\tau,0)$, $\tau^-\coloneqq \min(\tau,0)$ for a scalar function $\tau$, i.\,e., $\tau^++\tau^-=\tau$. Here, both re-formulations may be interpreted as different conservative PDS splittings satisfying $p_{i,j}=0$ for $j\notin\{i-1,i+1\}\subseteq\{1,\dotsc,N\}$ , $d_{i,j}=p_{j,i}$ for all $i,j$, and
	\begin{equation*}
		p_{i,i-1}(\bU)=\frac{1}{\Delta x}f(U_{i-1}),\qquad d_{i,i+1} = -\frac{1}{\Delta x}f(U_i) ,\qquad 	p_{i,i+1}(\bU)=d_{i,i-1}=0\qquad \text{ or }
		\end{equation*}
	\begin{align*}
		p_{i,i-1}(\bU)&=\frac{1}{\Delta x}\left( -(f(U_{i})-f(U_{i-1}))^-+f(U_i)\right),\qquad d_{i,i-1}(\bU)=\frac{1}{\Delta x}(f(U_i)-f(U_{i-1}))^+,\\
		d_{i,i+1}(\bU)&=-\frac{1}{\Delta x}\left( (f(U_{i+1})-f(U_i))^-
		 -f(U_{i+1})\right), \qquad p_{i,i+1}(\bU)= \frac{1}{\Delta x}(f(U_{i+1})-f(U_i))^+,
	\end{align*}
	respectively.
	It is worth noting that the first splitting corresponds to \eqref{eq:prod_dest} and possesses differentiable production and destruction terms in this example, while the second splitting does not. Also, by comparing $p_{i,i+1}$ it becomes evident that the two splittings differ. In what follows, we require that any splitting of the PDE results in a conservative PDS consisting of production and destruction terms which are non-negative on their domain.

\end{rmk}

Although, we first introduced one stage methods, also an $s$-stage Patankar-type difference scheme based on an explicit RK scheme with $\bA = (a_{kr})_{k,r=1,\dots,s}\geq \b 0$ and weights $b_1, \dots, b_s\geq 0$ can be devised (using $p_{i,j}=d_{j,i}$) by
\begin{equation}
	\begin{split}
		&U_i^{(k)}\!\! = U_i^n - \frac{\Delta t}{\Delta x} \sum\nolimits_{r=1}^{k-1} a_{kr} \biggl(d_{i,i+1}(\bU^{(r)}) \gamma_{i}^{(k)}  - p_{i,i+1}(\bU^{(r)}) \gamma_{i+1}^{(k)} -\bigl( d_{i-1,i}(\bU^{(r)}) \gamma_{i-1}^{(k)}-p_{i-1,i}(\bU^{(r)}) \gamma_{i}^{(k)} \bigr) \biggr), \\
		&U_i^{n+1}\!\! = U_i^n - \frac{\Delta t}{\Delta x} \sum\nolimits_{r=1}^{s} b_{r} \biggl(d_{i,i+1}(\bU^{(r)}) \delta_i  - p_{i,i+1}(\bU^{(r)}) \delta_{i+1}-\bigl( d_{i-1,i}(\bU^{(r)}) \delta_{i-1}-p_{i-1,i}(\bU^{(r)}) \delta_{i} \bigr) \biggr) ,
	\end{split}
	\label{eq:NSARK_stages}
\end{equation}
for $k=1,\dots,s$, where new NSWs $\gamma_\nu^{(k)}= \gamma_\nu^{(k)}(\hat\bU^n,t_n, \Delta t)$ are introduced for the stages $U_i^{(k)}$.
We define the new numerical flux based on $F^{\delta}_{g_{i+\frac{1}{2}}}(\bU^n)$ from \eqref{eq:flux_one_st}  as
\begin{equation}
	\bF^{n,\delta}_{g_{i+\frac{1}{2}}} := \sum\nolimits_{r = 1}^{s}b_r F^{\delta}_{g_{i+\frac{1}{2}}}(\bU^{(r)}) .
	\label{eq:sum_flux}
\end{equation}

\begin{rmk}[Systems of conservation laws]
	This approach can also be extended to systems of conservation laws.
	There, we apply the MP trick on every component of the vector $\mathbf u$ of the initial value problem that requires positivity, and we consider the numerical flux  for every such component as a scalar function $g$. With that it becomes evident that this approach can be applied 
    also for different space-discretizations as long as they can be written in flux formulation \cite{abgrall2023}.
\end{rmk}

\begin{rmk}[Negative RK weights]
    The MPRK can be designed also when some of the $\mathbf{A}$ and $\mathbf b$ coefficients are negative, see \cite{kopecz_order_2018}, by simply changing the role of production and destruction for that particular term in that stage. This has been applied systematically to design arbitrary high order MP methods as the MPDeC \cite{offner2020arbitrary} that will be used in the numerical section.
\end{rmk}

\begin{rmk}[High order discretizations]
    The difference schemes framework \eqref{eq:diffscheme} is very general and can encompass various spatial discretizations. This includes also various high order discretizations as Essentially Non-Oscillatory (ENO), WENO, FV \cite{ciallella2022arbitrary,ciallella2025high} and finite difference \cite{huang_positivity-preserving_2019} reconstructions schemes. 
    A slightly different formulation should be introduced to include also other high order discretizations as continuous and DG FE methods \cite{MO2014,zhu2024bound}, where the PDS is either written only on the interfaces of the cells or there is more freedom in the choice of the PD terms between the degrees of freedom inside each cell. 
\end{rmk}

\section{Lax--Wendroff theorem for MP difference schemes}\label{se_Main}

The aim of this section is to adapt the proof of Theorem~\ref{thm:LW} to Patankar-type schemes in the form \eqref{eq:NSARK_stages}. To achieve this, we need to control the nonlinear weights, resulting in the extra assumption that the numerical solution has a bounded total variation also in time (classical Lax--Wendroff requires only bounded total variation in space).
So, let us introduce the concept of total time variation (TTV). 
\begin{dfn}[Total time variation]\label{def:TTV}
    The TTV over a domain $[a,b]\times [0,T]$ of a function $U \in L^1_{\text{loc}}(\mathbb R \times \mathbb R^+)$ is 
    \begin{equation}
        \text{TTV}(U(x,[0,T]))\coloneqq \sup \sum_{n = 1}^{n_T}|U(x,t_n) -U(x,t_{n-1})|,
    \end{equation}
    where the supremum is taken over all possible subdivisions $t_0=0 <\dots < t_{n_T} = T$ for any $n_T\in \mathbb N$.
\end{dfn}

\begin{rmk}[Discrete total time variation]\label{rmk:TTV}
    On a domain $[a,b]\times [0,T]$ for a discrete function $\lbrace U_i^n \rbrace_{i,n}=\{U(x_i,t_n)\}_{i,n}$ for $i=1,\dots,N$ and $n=0,\dots, n_T$ with $t_0=0$ and $t_{n_T}=T$, the TTV can be equivalently defined using a piece-wise constant reconstruction of the discrete function on the space-time grid, obtaining the formula  
    \begin{equation}\label{eq:TTV}
        \text{TTV}(U(x,[0,T]))\coloneqq\sum_{n = 1}^{n_T}|U^{n}(x) -U^{n-1}(x)|,
    \end{equation}
    with $U^n(x)$ being the piece-wise constant reconstruction in space of $\lbrace U^n_i\rbrace_i$.
\end{rmk}
It should be noticed that this definition differs from the definition of total of time variation (TOTV) given in \cite{toth2023total}, and it is the equivalent of the TV definition in space, but switching the roles of time and space.

\begin{rmk}[TTV uniform boundedness]
	The TTV uniform boundedness, i.e., having a bound for the TTV of an approximate solution while the mesh is refined, will be used as a requirement in the next sections in many theorems. As for the TV boundedness in space, it is related to the non-oscillatory behavior of the numerical solution in time and, hence, their stability. 
	Moreover, we believe that there is a link between the TV boundedness and the TTV boundedness for hyperbolic problems, though it is not strainghtforward to prove it.
	During numerical results, we have seen that TTV boundedness seems naturally achieved 
	(with $\text{TTV}(U^{num}(x))$
	 converging to the exact $\text{TTV}(U^{ex}(x)) $) 
	 when a stable CFL number is used. On the contrary, when oscillations arise in the space discretization, they also propagate in time, leading to not converging $\text{TTV}(U^{num}(x))$. 
\end{rmk}

\begin{lem}[MPE weights convergence]\label{lem:delta}
Let $U^{\{l\}}(x_i,t_n)$ be the approximation of problem $\eqref{eq:InitProb}$ at the space $x^{\{l\}}_i$ and time $t_n^{\{l\}}$ for a mesh discretization $\Delta x_{\{l\}}\sim \Delta t_{\{l\}}\to 0$ for $l\to \infty$. For every $l$, consider $U^{\{l\}}(x,t)$ as a piece-wise constant reconstruction of the numerical solution $U^{\{l\}}(x_i,t_n)$. Let $\TTV(U^{\{l\}}(x,[0,T]))$ be uniformly bounded for all $x\in [a,b]$ and for all $l\in \mathbb N$. Suppose that $U^{\{l\}}\to u$ in $L^1_{\text{loc}}([a,b]\times [0,T] )$ with $u$ positive. Defining $\delta^{\{l\}}(x,t)$ as the piece-wise constant reconstruction of the MPE weights $\delta^{\{l\}}(x_i,t_n)=U^{\{l\}}(x_i,t_{n+1})/U^{\{l\}}(x_i,t_n)$ for all $i$ and $n$. Then, $\delta^{\{l\}}(x,t) \to 1$ almost everywhere (a.e.) in $[a,b]\times [0,T]$.
\end{lem}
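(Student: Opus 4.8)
The plan is to write $\delta^{\{l\}}-1$ as a discrete time-increment of $U^{\{l\}}$ divided by $U^{\{l\}}$, to make the numerator negligible by means of the uniform TTV bound, and to keep the denominator away from zero using that the limit $u$ is positive. Fix a domain $[a,b]\times[0,T]$. On the spatial cell containing $x_i$ and the time slab $[t_n,t_{n+1})$ the piece-wise constant reconstructions satisfy $U^{\{l\}}(x,t)=U^{\{l\}}(x_i,t_n)$, which is positive by the unconditional positivity of the MPE scheme, and $\delta^{\{l\}}(x,t)-1=\bigl(U^{\{l\}}(x_i,t_{n+1})-U^{\{l\}}(x_i,t_n)\bigr)/U^{\{l\}}(x_i,t_n)$, so that there $\bigl(\delta^{\{l\}}(x,t)-1\bigr)U^{\{l\}}(x,t)=U^{\{l\}}(x_i,t_{n+1})-U^{\{l\}}(x_i,t_n)$. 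Summing over all cells and time slabs and invoking Remark~\ref{rmk:TTV},
\[
\int_a^b\!\!\int_0^T \bigl|\delta^{\{l\}}(x,t)-1\bigr|\,U^{\{l\}}(x,t)\,dt\,dx
= \Delta t_{\{l\}}\int_a^b \TTV\bigl(U^{\{l\}}(x,[0,T])\bigr)\,dx
\le \Delta t_{\{l\}}\,(b-a)\,C ,
\]
where $C$ is the uniform TTV bound. Since $\Delta t_{\{l\}}\to0$, this proves $(\delta^{\{l\}}-1)U^{\{l\}}\to0$ in $L^1([a,b]\times[0,T])$.

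It remains to remove the weight $U^{\{l\}}$, and this is where positivity of $u$ enters. Fix $\varepsilon>0$ and, since $u>0$ a.e.\ on the compact domain, choose $m>0$ with $\bigl|\{u\le m\}\bigr|<\varepsilon$. By $U^{\{l\}}\to u$ in $L^1$ and Chebyshev's inequality, $\bigl|\{U^{\{l\}}<m/2\}\cap\{u>m\}\bigr|\le \tfrac{2}{m}\|U^{\{l\}}-u\|_{L^1}\to0$, while on $\{U^{\{l\}}\ge m/2\}$ one has $|\delta^{\{l\}}-1|\le \tfrac{2}{m}\bigl|\delta^{\{l\}}-1\bigr|U^{\{l\}}$. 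Combining these with another application of Chebyshev,
\[
\bigl|\{|\delta^{\{l\}}-1|>\varepsilon\}\bigr|
\le \bigl|\{u\le m\}\bigr| + \bigl|\{U^{\{l\}}<m/2\}\cap\{u>m\}\bigr| + \frac{2}{\varepsilon m}\,\bigl\|(\delta^{\{l\}}-1)U^{\{l\}}\bigr\|_{L^1}.
\]
Letting $l\to\infty$, the last two terms vanish and the first is $<\varepsilon$; as $\varepsilon$ was arbitrary, $\delta^{\{l\}}\to1$ in measure on $[a,b]\times[0,T]$, hence $\delta^{\{l\}}\to1$ a.e.\ along a subsequence — the form actually used in the Lax--Wendroff argument. (Pointwise: along a common subsequence where $U^{\{l\}}\to u$ and $(\delta^{\{l\}}-1)U^{\{l\}}\to0$ a.e., the quotient gives $\delta^{\{l\}}-1\to0$ at a.e.\ point, since $u>0$ a.e.)

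I expect the main obstacle to be exactly this last passage from the weighted to the unweighted estimate: the bound on $\int\!\!\int|\delta^{\{l\}}-1|\,U^{\{l\}}$ follows almost immediately from the TTV hypothesis and $\Delta t_{\{l\}}\to0$, but controlling $\delta^{\{l\}}$ on the region where $U^{\{l\}}$ is small genuinely requires that the limit be bounded away from zero off a set of small measure — which is precisely why the hypothesis demands $u$ \emph{positive} rather than merely non-negative. (If in addition one knew $\{\delta^{\{l\}}\}_l$ to be uniformly integrable, the same computation would upgrade the conclusion to convergence in $L^1$, and thus to a.e.\ convergence along the whole sequence.)
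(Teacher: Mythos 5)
Your proof is correct and rests on the same basic decomposition as the paper's, namely $\delta^{\{l\}}-1=\bigl(U^{\{l\}}(x,t+\Delta t_{\{l\}})-U^{\{l\}}(x,t)\bigr)/U^{\{l\}}(x,t)$, with the TTV hypothesis killing the numerator and the positivity of $u$ controlling the denominator; but the mechanism you use to extract the conclusion is genuinely different and considerably more explicit. The paper disposes of the numerator in one sentence, asserting $U^{\{l\}}(x,t+\Delta t_{\{l\}})-U^{\{l\}}(x,t)\to 0$ a.e.\ and pointing (in the discussion after the proof) to compactness of BV functions, the countability of their discontinuities, and Helly's selection theorem. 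You instead prove the quantitative estimate $\int_a^b\int_0^T\bigl|\delta^{\{l\}}-1\bigr|\,U^{\{l\}}\le \Delta t_{\{l\}}\,(b-a)\,C\to 0$ --- which is exactly the TTV bound read as an $L^1$ statement --- and then strip off the weight $U^{\{l\}}$ by a Chebyshev argument on the region where $u$ is bounded away from zero. What your route buys is a self-contained argument that isolates precisely where each hypothesis enters (the uniform TTV bound for the $L^1$ smallness, the positivity of $u$ for the division, the $L^1_{\mathrm{loc}}$ convergence to keep $U^{\{l\}}$ from collapsing where $u>m$); what it costs is that you only obtain convergence in measure for the full sequence and a.e.\ convergence along a subsequence. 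You flag this honestly, and it is not a defect relative to the paper: since $U^{\{l\}}\to u$ in $L^1_{\mathrm{loc}}$ itself only yields a.e.\ convergence of the denominator along a subsequence, the paper's two-line proof carries the same caveat, and subsequential a.e.\ convergence is all the downstream Lax--Wendroff argument requires (whether the limit $u$ is a weak solution does not depend on the choice of subsequence).
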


\begin{proof}
	Consider the MPE weights 
	$$\delta^{\{l\}}(x,t):= \frac{U^{\{l\}}(x,t+\Delta t_{\{l\}})}{U^{\{l\}}(x,t)}=1+\frac{U^{\{l\}}(x,t+\Delta t_{\{l\}})-U^{\{l\}}(x,t)}{U^{\{l\}}(x,t)}.$$
	Now, we can say that $\lim_{l\to\infty} U^{\{l\}}(x,t+\Delta t_{\{l\}})-U^{\{l\}}(x,t) = 0$ a.\,e.\ because the numerical solutions are TTV bounded uniformly for all $l$ and $x$ and the denominator, $U^{\{l\}}$ is converging to a weak solution $u$ in $L^1_{\text{loc}}$.
\end{proof}
The arguments of this proof are the same used in space for the Lax--Wendroff theorem in the proof of \cite{leveque_numerical_1992}. These follow from some classical analysis results using the compactness of bounded total variation functions in $L^1_{loc}$, the fact that in 1D functions of bounded variation have at most countably many discontinuities and the Helly’s selection theorem to deduce that bounded sequences of 1D bounded total variation functions are precompact for pointwise convergence almost everywhere \cite{ambrosio2000functions}.

\begin{thm}[Lax--Wendroff for MPE difference schemes]\label{th_Main_MP}
    Consider a sequence of step sizes $\Delta x_{\{l\}} > 0$ with  $ \lim_{l{ \rightarrow \infty}} \Delta x_{\{l\}} = 0$. Let $\left( U^{\{l\}} \right)_l $ be a sequence of piece-wise constant functions constructed of numerical solutions obtained by a MPE difference scheme \eqref{eq:MPRK_difference} consistent with \eqref{eq:InitProb} with time step size $\Delta t_{\{l\}} = \lambda \Delta x_{\{l\}}, \lambda \equiv \mathrm{const}.$ If for any domain $[a,b]\times [0,T]$ the total variation $TV \left(U^{\{l\}}([a,b], t)\right)$ from \eqref{eq:TTV} is uniformly bounded for all $l$ and $t \in [0,T]$, the total time variation $TTV\left(U^{\{l\}}(x, [0,T])\right)$ is uniformly bounded for all $l$ and $x \in [a,b]$ and if $U^{\{l\}}\xrightarrow{l \rightarrow \infty} u$ in $L^1_{\mathrm{loc}}(\R\times [0,T))$, then $u$ is a weak solution of the conservation law \eqref{eq:InitProb}.
    \label{thm:LWMP}
\end{thm}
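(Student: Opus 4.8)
The strategy is to mimic the classical Lax--Wendroff argument recalled in Theorem~\ref{thm:LW}, reducing the MPE difference scheme to a standard conservative scheme with a modified numerical flux to which the classical theorem applies, and then controlling the extra nonlinear weights with Lemma~\ref{lem:delta}. Concretely, the MPE difference scheme \eqref{eq:MPRK_difference} can be written in the conservative form $U_i^{n+1} = U_i^n - \frac{\Delta t}{\Delta x}\bigl(F^{n,\delta}_{g_{i+1/2}} - F^{n,\delta}_{g_{i-1/2}}\bigr)$ with the modified flux $F^\delta_{g_{i+1/2}}(\bU^n) = \max\{g^n_{i+1/2},0\}\delta_i + \min\{g^n_{i+1/2},0\}\delta_{i+1}$ from \eqref{eq:flux_one_st}. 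The plan is to multiply this update by an arbitrary test function $\phi \in C_c^\infty(\R\times[0,T))$ evaluated at the grid points, sum by parts in both space and time exactly as in the proof of \cite{leveque_numerical_1992}, and take the limit $l\to\infty$.

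First I would fix a compact domain $[a,b]\times[0,T]$ containing $\supp\phi$ and write out the summation-by-parts identity, which yields three groups of terms: a time-derivative term $\sum_{i,n}(\phi_i^{n}-\phi_i^{n-1})U_i^n\,\Delta x$, a space/flux term $\sum_{i,n}(\phi_{i}^{n}-\phi_{i-1}^{n})F^{n,\delta}_{g_{i-1/2}}\,\Delta t$, and an initial-data boundary term $\sum_i \phi_i^0 U_i^0\,\Delta x$. The first and third terms pass to the limit $-\iint u\,\partial_t\phi - \int \psi\,\phi(\cdot,0)$ exactly as in the classical proof, using $U^{\{l\}}\to u$ in $L^1_{\text{loc}}$ and (for the initial term) consistency of the initialization. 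The crux is the flux term: I would split $F^{n,\delta}_{g_{i-1/2}} = g^n_{i-1/2} + \bigl(\max\{g^n_{i-1/2},0\}(\delta_i-1) + \min\{g^n_{i-1/2},0\}(\delta_{i+1}-1)\bigr)$, so that the $g^n_{i-1/2}$ part converges to $\iint f(u)\,\partial_x\phi$ by the classical Lax--Wendroff argument (using consistency of $g$, local Lipschitz continuity, and the uniform spatial TV bound), leaving a remainder involving the factors $\delta-1$.

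The main obstacle, and the only genuinely new part, is showing this remainder vanishes in the limit. Here I would use Lemma~\ref{lem:delta}: under the uniform TTV bound and $L^1_{\text{loc}}$ convergence to a \emph{positive} limit $u$, the reconstructed weights $\delta^{\{l\}}(x,t)\to 1$ almost everywhere. Combined with a uniform bound $|g^n_{i-1/2}|\le C$ on the numerical flux over the compact domain — which follows from the local Lipschitz bound in Definition~\ref{def:cons}, the uniform spatial TV bound, and the $L^\infty$-type control these give on the $U_i^n$ — and boundedness of $\phi$ and its difference quotients, the summand $(\phi_i^n-\phi_{i-1}^n)\bigl(\max\{g,0\}(\delta_i-1)+\min\{g,0\}(\delta_{i+1}-1)\bigr)\Delta t$ is dominated by an integrable function (namely $C'\,|\delta^{\{l\}}-1|$ times an indicator of the compact set, which is uniformly bounded), so the dominated convergence theorem forces the remainder to $0$. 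Care is needed to phrase everything in terms of the piece-wise constant reconstructions so that the discrete sums become genuine integrals and dominated convergence applies cleanly; one must also verify that $\delta_i$ and $\delta_{i+1}$, being evaluated one grid cell apart, still converge a.e.\ simultaneously, which is immediate since a spatial shift by $\Delta x_{\{l\}}\to 0$ does not affect a.e.\ limits of $L^1_{\text{loc}}$-convergent reconstructions. Assembling the three limits gives $\iint\bigl(u\,\partial_t\phi + f(u)\,\partial_x\phi\bigr)\,\dx\,\dt + \int \psi(x)\,\phi(x,0)\,\dx = 0$ for all admissible $\phi$, i.e.\ $u$ is a weak solution.
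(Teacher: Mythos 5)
Your proposal is correct and follows essentially the same route as the paper: summation by parts against a test function, the classical treatment of the time-derivative and initial-data terms, and the identical decomposition of the modified flux into $g$ plus a remainder weighted by $\delta-1$, which is killed a.e.\ by Lemma~\ref{lem:delta} while the $g$-part is handled via consistency and the spatial TV bound. Your explicit invocation of a uniform flux bound and dominated convergence is in fact a more careful rendering of what the paper dismisses as ``some standard estimates.''
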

\begin{proof}
		We follow the original Lax--Wendroff proof from \cite{lax_systems_1960,leveque_numerical_1992} and adapt it for our case where needed. The main goal is to show that the limit function $u(x,t)$ of the considered method satisfies
	\begin{equation}
		\int_{0}^{\infty} \int_{-\infty}^{\infty} [\phi_t u + \phi_x f(u)]\dx \mathrm{dt} = - \int_{-\infty}^{\infty} \phi(x,0)u(x,0) \dx.
		\label{eq:weak_LW}
	\end{equation}
     for all $\phi \in C_0^1([a,b]\times [0,T])$. In what follows, we omit to write the dependency on the refinement represented by the index $l$ if it is clear from the context. 
     
Multiplying the numerical method \eqref{eq:MPRK_difference} by any test function  $\phi \in C_0^1$ and summing over all $i$ and $n\geq 0$ yields
	\begin{equation*}
		\sum_{n=0}^{\infty} \sum_{i=-\infty}^{\infty} \phi(x_i,t_n)(U_i^{n+1}-U_i^n)=\frac{\Delta t}{\Delta x} \sum_{n=0}^{\infty} \sum_{i=-\infty}^{\infty} \phi(x_i,t_n)\left[F^{\delta}_{g_{i+\frac{1}{2}}}(\bU^n)-F^{\delta}_{g_{i-\frac{1}{2}}}(\bU^n)\right].
	\end{equation*} We note that the value of, e.\,g., $U_i^n$ does not have an effect for $(x_i,t_n)\notin\supp(\phi)\subseteq[a,b]\times [0,T]$. Using summation by parts, the fact $\phi$ has a compact support, multiplying by $\Delta x$ and rearranging leads to
	\begin{equation}
    \begin{split}
		&\Delta x\Delta t\left[\sum_{n=1}^{\infty} \sum_{i=-\infty}^{\infty} \left(\frac{\phi(x_i,t_n)-\phi(x_i,t_{n-1})}{\Delta t}\right)U_i^n +\sum_{n=0}^{\infty}\sum_{i=-\infty}^{\infty}\left(\frac{\phi(x_{i+1},t_n)-\phi(x_i,t_n)}{\Delta x}\right)F^{\delta}_{g_{i+\frac{1}{2}}}(\bU^n)\right] \\
        =&-\Delta x\sum_{i=-\infty}^{\infty}\phi(x_i,0)U_i^0.
		\label{eq:disc_prf_lw}
        \end{split}
	\end{equation}
    Now we show that this equation coincides with the desired assertion \eqref{eq:weak_LW} as we pass to the limit $ \lim_{l{ \rightarrow \infty}} \Delta x_{\{l\}} = 0$ with $\Delta t_{\{l\}} = \lambda \Delta x_{\{l\}}, \lambda \equiv \mathrm{const}$. 
    For the first and the last term of the previous equation, the classical proof of Lax and Wendroff should be applied, as outlined in \cite{leveque_numerical_1992}. This means that as $l \rightarrow \infty$, the assumed $L^1_{\mathrm{loc}}$ convergence of $U^{\{l\}}$ to $u$, together with the smoothness of $\phi$, ensures the convergence of the first summand to $\int_0^\infty \int_{-\infty}^{\infty} \phi_t(x,t)u(x,t)\dx$. In addition, the choice of the initial data $U_i^0=\psi_i$ enables us to conclude the convergence of the RHS to $-\int_{-\infty}^{\infty} \phi(x,0)u(x,0)\dx$. 
	The remaining task is to ascertain the limit for the middle term. In particular, we are interested in showing the convergence of the new numerical flux to the physical flux and therefore in estimating 
    \begin{equation}
		\left\lvert F^{\delta,\{l\}}_{g} (\mathbf{x}_i,t_n) - f(U^{\{l\}}(x_i,t_n)) \right\rvert
        \label{eq:conv_numflux_lw}
	\end{equation}
	for $l\to \infty$
	where $\mathbf x_i = (x_i-p\Delta x,\dots, x_i+q\Delta x)$. Here, $F^{\delta, \{l\}}_{g} (\mathbf{x}_i,t_n)$ is a piece-wise constant reconstruction of the values $F^{\delta}_{g_{i+\frac{1}{2}}}(\bU^n)$ given in \eqref{eq:flux_one_st} by substituting 
    \[
    g_{i+\frac12}^{n,\{l\}}
    =g(U^{\{l\}}(\mathbf{x}_i ,t_n))  
    = g\bigl(U^{\{l\}}(x_i-p \Delta x,t_n),\dots,U^{\{l\}}(x_i+q\Delta x,t_n)\bigr)
    \] for $i=1,\dots,N_l$ and $n=0,\dots,n_{T,\{l\}}$ for $g_{i+\frac12}^n$. Adding and subtracting
    \[
    g(U^{\{l\}}(\mathbf{x}_i ,t_n))=\max \lbrace g(U^{\{l\}}(\mathbf{x}_i ,t_n)),0 \rbrace  + \min \lbrace g(U^{\{l\}}(\mathbf{x}_i ,t_n)),0\rbrace 
    \]
to \eqref{eq:conv_numflux_lw} leads to
    \begin{align*}
		&\lvert F^{\delta}_{g_{i+\frac{1}{2}}}(\bU^n) - f(U^{\{l\}}(x_i,t_n)) \rvert= \left\lvert \max \lbrace g(U^{\{l\}}(\mathbf{x}_i ,t_n)),0 \rbrace \delta_{i}^{\{l\}} + \min \lbrace  g(U^{\{l\}}(\mathbf{x}_i ,t_n)),0\rbrace  \delta_{i+1}^{\{l\}}- f(U^{\{l\}}(x_i,t_n)) \right\rvert \\
	&\leq  \left\lvert \max \lbrace g(U^{\{l\}}(\mathbf{x}_i ,t_n)),0 \rbrace \delta_{i}^{\{l\}} -\max \lbrace g(U^{\{l\}}(\mathbf{x}_i ,t_n)),0 \rbrace  \right\rvert +  \left\lvert \min \lbrace g(U^{\{l\}}(\mathbf{x}_i ,t_n)),0\rbrace  \delta_{i+1}^{\{l\}}-\min \lbrace g(U^{\{l\}}(\mathbf{x}_i ,t_n)),0\rbrace \right\rvert  \\
        &\qquad + \left\lvert \max \lbrace g(U^{\{l\}}(\mathbf{x}_i ,t_n)),0 \rbrace  + \min \lbrace g(U^{\{l\}}(\mathbf{x}_i ,t_n)),0\rbrace  - f(U^{\{l\}}(x_i,t_n)) \right\rvert\\
        &= \left\lvert \max \lbrace g(U^{\{l\}}(\mathbf{x}_i ,t_n)),0 \rbrace \delta_{i}^{\{l\}} -\max \lbrace g(U^{\{l\}}(\mathbf{x}_i ,t_n)),0 \rbrace  \right\rvert +  \left\lvert \min \lbrace g(U^{\{l\}}(\mathbf{x}_i ,t_n)),0\rbrace  \delta_{i+1}^{\{l\}}-\min \lbrace g(U^{\{l\}}(\mathbf{x}_i ,t_n)),0\rbrace \right\rvert  \\
        &\qquad+ \left\lvert g(U^{\{l\}}(\mathbf{x}_i ,t_n))  - f(U^{\{l\}}(x_i,t_n)) \right\rvert,
	\end{align*}
	where the first two addends vanish a.e. as $l \rightarrow \infty$ due to the convergence of the MPE weights based on the TTV boundedness property, as stated in Lemma~\ref{lem:delta}.
	
    Finally, as only the original numerical flux $g$ appears in the remaining term $\left\lvert g(U^{\{l\}}(\mathbf{x}_i ,t_n))  - f(\bU^{\{l\}}(x_i,t_n)) \right\rvert$, we can follow the reasoning of the Lax--Wendroff theorem of \cite{leveque_numerical_1992}. 
Consistency  of $g$ is provided, i.e., we have the estimate
    \begin{align*}
    &\left\lvert g(U^{\{l\}}(\mathbf{x}_i ,t_n))  - f(U^{\{l\}}(x_i,t_n)) \right\rvert  = \\
    &\left\lvert g\bigl(U^{\{l\}}(x_i-p \Delta x,t_n),\dots,U^{\{l\}}(x_i+q\Delta x,t_n)\bigr)  - f(U^{\{l\}}(x_i,t_n)) \right\rvert  
    \leq K \max_{-p\leq j \leq q} \left\lvert U^{\{l\}}(x_i + j\Delta x, t_n) - U^{\{l\}}(x_i,t_n)\right\rvert        
    \end{align*}
    with a Lipschitz constant $K$, where the total variation boundedness (TVB) property of $U^{\{l\}}(\cdot,t)$ leads to
    $$\max_{-p\leq j \leq q} \left\lvert U^{\{l\}}(x + j\Delta x, t) - U^{\{l\}}(x,t)\right\rvert\rightarrow 0$$ as $l \rightarrow \infty$ for almost every value of $x$.
    Altogether we proved the a.\,e.\ convergence of the numerical flux function $F^{\delta, \{l\}}_{g} (\mathbf x_i,t_n)$ to the physical flux function $f(U^{\{l\}}(x_i,t_n))$. 
    This gives, together with some standard estimates, the convergence of \eqref{eq:disc_prf_lw}  to the weak form \eqref{eq:weak_LW} and shows, since it is true for any test function $\phi \in C_0^1$, that $u$ is indeed a weak solution.

\end{proof}

\subsection{Extension to Patankar-type RK}
In this section, we draw the steps to extend the previous proof to the multi-stage Patankar-type schemes.
\begin{dfn}[Total time variation for Runge--Kutta]
	The total time variation of a numerical solution $\lbrace U_i^n \rbrace_{i,n}=\{U(x_i,t_n)\}_{i,n}$ obtained by an s-stage RK method (TTVRK) over a domain $[a,b]\times [0,T]$ for $i=1,\dots,N$ and $n=0,\dots, n_T$ with $x_1=a$, $x_N=b$, $t_0=0$ and $t_{n_T}=T$ is 
    	\begin{equation}
				\text{TTVRK}(\bU(x,[0,T])):=\sum_{n = 0}^{n_T}\sum_{k=1}^s|U^{(k)}(x,t_n) -U^{(k-1)}(x,t_n)| +\sum_{n = 1}^{n_T}|U(x,t_n) -U^{(s)}(x,t_{n-1})|,
	\end{equation}
    where $U^{(0)}(x,t_n)\coloneqq U(x,t_n)$ and $U^{(k)}(x,t_n)$ denotes the piece-wise constant reconstruction in space of the $k$-th stage computed from the numerical solution $\mathbf U^n$ at time level $n$.
\end{dfn}

\begin{lem}[Extended Patankar weights convergence]\label{lem:weights_MPRK}
	Let $U^{\{l\}}(x_i,t_n)$ be the approximation of problem $\eqref{eq:InitProb}$ at the space $x^{\{l\}}_i$ and time $t_n^{\{l\}}$ for a mesh discretization $\Delta x_{\{l\}}\sim \Delta t_{\{l\}}\to 0$ for $l\to \infty$. For every $l$, consider $U^{\{l\}}(x,t)$ a piece-wise constant reconstruction of the numerical solution $U^{\{l\}}(x_i,t_n)$. Let $\text{TTVRK}(U^{\{l\}}(x,[0,T]))$ be uniformly bounded for all $x\in [a,b]$ and for all $l\in \mathbb N$. Suppose that $U^{\{l\}}\to u$ in $L^1_{\text{loc}}([a,b]\times [0,T] )$. Defining $\delta^{\{l\}}(x,t)$ as the piece-wise constant reconstruction of $\delta^{\{l\}}(x_i,t_n)=U^{\{l\}}(x_i,t_{n+1})/\sigma^{\{l\}}(x_i,t_n)$ and $\gamma^{(k),\{l\}}(x,t)$ the one of $\gamma^{(k),\{l\}}(x_i,t_n^k)=U^{(k),\{l\}}(x_i,t_{n}^k)/\pi^{(k),\{l\}}(x_i,t_n^k)$ for all $i$ and $n$. Then, $\delta^{\{l\}}(x,t)$ and $\gamma^{(k),\{l\}}(x,t) \to 1$ almost everywhere in $[a,b]\times [0,T]$ if $\sigma^{\{l\}}(x,t)$, respectively, $\pi^{(k),\{l\}}(x, t) \rightarrow u(x,t)$ as $l\to\infty$.
\end{lem}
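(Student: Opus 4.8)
The plan is to reproduce the proof of Lemma~\ref{lem:delta} stage by stage, the only genuinely new ingredient being the hypothesis that the Patankar weight denominators converge to $u$. Writing the reconstructed weights as
\[
\delta^{\{l\}}(x,t) = 1 + \frac{U^{\{l\}}(x,t+\Delta t_{\{l\}}) - \sigma^{\{l\}}(x,t)}{\sigma^{\{l\}}(x,t)},
\qquad
\gamma^{(k),\{l\}}(x,t) = 1 + \frac{U^{(k),\{l\}}(x,t) - \pi^{(k),\{l\}}(x,t)}{\pi^{(k),\{l\}}(x,t)},
\]
and recalling that by assumption $\sigma^{\{l\}},\pi^{(k),\{l\}}\to u$ a.e.\ with $u$ positive (as in Lemma~\ref{lem:delta}), it suffices to show that the numerators --- i.e.\ the piece-wise constant reconstructions of $U^{\{l\}}(x,t+\Delta t_{\{l\}})$ and of each stage $U^{(k),\{l\}}$ --- also converge to $u(x,t)$ a.e.\ on $[a,b]\times[0,T]$; the quotients then tend to $u/u=1$ a.e.

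I would obtain the convergence of the numerators from the vanishing of the elementary increments out of which $\text{TTVRK}$ is built. With the convention $U^{(0)}(x,t_n):=U(x,t_n)$, the telescoping identities
\[
U^{(k),\{l\}}(x,t_n)-U^{\{l\}}(x,t_n)=\sum_{j=1}^{k}\bigl(U^{(j),\{l\}}(x,t_n)-U^{(j-1),\{l\}}(x,t_n)\bigr),
\]
\[
U^{\{l\}}(x,t_n)-U^{\{l\}}(x,t_{n-1})=\bigl(U^{\{l\}}(x,t_n)-U^{(s),\{l\}}(x,t_{n-1})\bigr)+\sum_{j=1}^{s}\bigl(U^{(j),\{l\}}(x,t_{n-1})-U^{(j-1),\{l\}}(x,t_{n-1})\bigr)
\]
express $U^{(k),\{l\}}-U^{\{l\}}$ and $U^{\{l\}}(x,t+\Delta t_{\{l\}})-U^{\{l\}}(x,t)$ as finite sums of precisely the increments appearing in the definition of $\text{TTVRK}$. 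Reconstructing any one such increment as a piece-wise constant function on $[a,b]\times[0,T]$, its $L^1$-norm is $\Delta x_{\{l\}}\Delta t_{\{l\}}$ times the sum over all $(i,n)$ of the absolute value of that increment, hence at most $\Delta t_{\{l\}}\sum_i \Delta x_{\{l\}}\,\text{TTVRK}\bigl(U^{\{l\}}(x_i,[0,T])\bigr)\le (b-a)\,C\,\Delta t_{\{l\}}\to 0$ by the uniform $\text{TTVRK}$ bound. By the same compactness arguments used after Lemma~\ref{lem:delta} --- Helly's selection theorem, the fact that one-dimensional $BV$ functions have at most countably many discontinuities, and precompactness of $BV$-bounded sequences for a.e.\ convergence (along a subsequence if necessary) --- each increment tends to $0$ a.e., and since the number of stages $s$ is fixed the finite sums above also vanish a.e. Combined with $U^{\{l\}}\to u$ a.e.\ (from the $L^1_{\mathrm{loc}}$ hypothesis), this yields $U^{(k),\{l\}}(x,t)\to u(x,t)$ and $U^{\{l\}}(x,t+\Delta t_{\{l\}})\to u(x,t)$ a.e., whence $\delta^{\{l\}},\gamma^{(k),\{l\}}\to 1$ a.e.

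The main obstacle here is bookkeeping rather than analysis: one has to verify that all $s+1$ families of increments per time step are dominated by the single scalar $\text{TTVRK}(U^{\{l\}}(x,[0,T]))$, so that what must be driven to zero is a finite telescoping sum and not a number of terms that grows with $l$; the analytic core --- passing from $L^1$-smallness together with a $BV$ bound to a.e.\ convergence --- is exactly that of Lemma~\ref{lem:delta}, only with the roles of time and space exchanged and with the extended ``time-plus-stages'' grid in place of the plain time grid. It should also be stressed why the additional hypothesis $\sigma^{\{l\}},\pi^{(k),\{l\}}\to u$ cannot be dispensed with: for the MPE scheme one has $\sigma_i=U_i^n$ by construction, so the denominator is itself a numerical solution value, whereas for a general Patankar-type method the PWDs are method-dependent algebraic combinations of the stage values, and their limit is not pinned down by the $\text{TTVRK}$ bound alone --- hence it must be assumed, as it is in the statement.
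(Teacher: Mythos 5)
Your proposal is correct and follows essentially the same route as the paper: the same decomposition $\delta^{\{l\}}=1+(U^{\{l\}}(x,t+\Delta t_{\{l\}})-\sigma^{\{l\}})/\sigma^{\{l\}}$ (and analogously for $\gamma^{(k),\{l\}}$), with the TTVRK bound forcing the stage/update increments to vanish a.e.\ and the assumed convergence $\sigma^{\{l\}},\pi^{(k),\{l\}}\to u$ handling the denominators. You actually spell out more detail than the paper does (the telescoping over stages and the $L^1$ estimate $\le (b-a)\,C\,\Delta t_{\{l\}}$), where the paper simply refers back to the reasoning of Lemma~\ref{lem:delta}.
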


\begin{proof}
First we investigate the weights of the update
$$\delta^{\{l\}}(x,t):= \frac{U^{\{l\}}(x,t+\Delta t_{\{l\}})}{\sigma^{\{l\}}(x,t)}=1+\frac{U^{\{l\}}(x,t+\Delta t_{\{l\}})-\sigma^{\{l\}}(x,t)}{\sigma^{\{l\}}(x,t)}.$$ 
	Now, we can follow the same reasoning as in the proof of Lemma~\ref{lem:delta} under usage of the uniformly bounded TTVRK and the convergence of the numerical solutions to a weak solution $u$ in $L^1_{\text{loc}}$, to show $\lim_{l\to\infty} U^{\{l\}}(x,t+\Delta t_{\{l\}})=u(x,t)$ a.e.. Furthermore we assumed that $\sigma^{\{l\}}(x,t) \rightarrow  u(x,t)$ which finally leads to $\lim_{l\to\infty} U^{\{l\}}(x,t+\Delta t_{\{l\}})-\sigma^{\{l\}}(x,t) = 0$ and thus to $\delta^{\{l\}}(x,t) \rightarrow 1$.
We proceed considering the weights of the stages
$$\gamma^{(k),\{l\}}(x,t)=\frac{U^{(k),\{l\}}(x,t)}{\pi^{(k),\{l\}}(x,t)} = 1 + \frac{U^{(k),\{l\}}(x,t)-\pi^{(k),\{l\}}(x,t)}{\pi^{(k),\{l\}}(x,t)}.$$
Once again, we use the fact that TTVRK is uniformly bounded, together with the convergence to a weak solution $u$ for the limit $\lim_{l\to\infty} U^{(k),\{l\}}(x,t)=u(x,t)$. 
Additionally, we assumed $\pi^{(k),\{l\}}(x, t) \rightarrow u(x,t)$ as $l\to\infty$, so this leads to $\lim_{l\to\infty} U^{(k),\{l\}}(x,t)-\pi^{(k),\{l\}}(x,t) = 0$ and thus to the desired result.
\end{proof}

In the previous Lemma~\ref{lem:weights_MPRK}, some convergence for the PWDs was assumed, that is $\sigma^{\{l\}}(x,t)$ respectively $\pi^{(k),\{l\}}(x,t) \rightarrow u(x,t)$ almost everywhere for $l \rightarrow \infty$ and $(x,t) \in [a,b]\times [0,T]$.

\begin{thm}[Lax--Wendroff for MPRK schemes]\label{thm:lxw_MPRK}
     Assume the PWDs of an MPRK method continuously depend on the $i$-th component of the stages, i.e.
    $$ \pi_i^{(k)} = \pi_i^{(k)}(U_i^n, U_i^{(1)}, \dots, U_i^{k-1}) \quad \text{and} \quad
    \sigma_i = \sigma_i(U_i^n, U_i^{(1)}, \dots, U_i^{(s)}).
    $$
    Furthermore we assume that the PWDs are consistent in the sense that 
    $$
    \pi_i^{(k)}(U_i^n, U_i^n, \dots, U_i^n) = U_i^n \quad \text{and} \quad 
    \sigma_i(U_i^n, U_i^n, \dots, U_i^n) = U_i^n.
    $$
    Then, the PWDs fulfill the extended Patankar weights convergence of Lemma~\ref{lem:weights_MPRK}.
\begin{proof}
    Recall that $U^{(k),\{l\}} \rightarrow u$ for $l \rightarrow \infty$ by means of the uniform boundedness of the TTVRK together with $U^{\{l\}}\to u$ in $L^1_{\text{loc}}([a,b]\times [0,T] )$, as mentioned in the proof of Lemma~\ref{lem:weights_MPRK}. Hence, the arguments of the PDWs tend to $u$, and thus, the continuity and consistency of the PWDs yield $\sigma^{\{l\}}(x,t),\,\pi^{(k),\{l\}}(x,t) \rightarrow u(x,t)$ almost everywhere for $l \rightarrow \infty$ and $(x,t) \in [a,b]\times [0,T]$.
\end{proof}
\end{thm}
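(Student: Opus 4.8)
The plan is to reduce the claim of Theorem~\ref{thm:lxw_MPRK} to an application of Lemma~\ref{lem:weights_MPRK}, so the only thing that needs to be established is the hypothesis of that lemma, namely the almost everywhere convergence of the piece-wise constant reconstructions $\sigma^{\{l\}}(x,t)$ and $\pi^{(k),\{l\}}(x,t)$ to the limit $u(x,t)$. Everything downstream of that convergence is already contained in the statement of Lemma~\ref{lem:weights_MPRK}, so the proof is genuinely short.

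First I would record the consequences of the uniform boundedness of the TTVRK. Exactly as noted in the proof of Lemma~\ref{lem:weights_MPRK}, the uniform TTVRK bound together with the assumed $L^1_{\mathrm{loc}}$ convergence $U^{\{l\}} \to u$ forces each stage reconstruction $U^{(k),\{l\}}(x,t)$ to converge to $u(x,t)$ almost everywhere in $[a,b]\times[0,T]$ (the telescoping increments $|U^{(k)}(x,t_n)-U^{(k-1)}(x,t_n)|$ and $|U(x,t_n)-U^{(s)}(x,t_{n-1})|$ all go to zero a.e.\ because their sum is uniformly bounded, and the base point $U^{(0),\{l\}}=U^{\{l\}}\to u$). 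In particular, for a.e.\ fixed $(x,t)$, the whole vector of arguments $\bigl(U_i^n, U_i^{(1)},\dots,U_i^{(s)}\bigr)$ evaluated at the grid point associated with $(x,t)$ converges to the common value $u(x,t)$ as $l\to\infty$.

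Next I would invoke the two structural hypotheses on the PWDs. By assumption $\pi_i^{(k)}$ depends only on the $i$-th components $U_i^n, U_i^{(1)},\dots,U_i^{(k-1)}$, and similarly $\sigma_i$ depends only on $U_i^n,U_i^{(1)},\dots,U_i^{(s)}$; moreover these maps are continuous and consistent in the sense that $\pi_i^{(k)}(v,v,\dots,v)=v$ and $\sigma_i(v,v,\dots,v)=v$. Since, for a.e.\ $(x,t)$, every component argument tends to $u(x,t)$, continuity of $\pi_i^{(k)}$ and $\sigma_i$ gives $\pi^{(k),\{l\}}(x,t)\to \pi_i^{(k)}(u(x,t),\dots,u(x,t))$ and $\sigma^{\{l\}}(x,t)\to\sigma_i(u(x,t),\dots,u(x,t))$, and the consistency conditions identify both limits as $u(x,t)$. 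This is precisely the extra hypothesis required in Lemma~\ref{lem:weights_MPRK}, so that lemma applies and yields $\delta^{\{l\}}(x,t),\,\gamma^{(k),\{l\}}(x,t)\to 1$ a.e., which is the assertion.

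The only subtlety — and the place where one has to be slightly careful rather than the "main obstacle" in any deep sense — is the passage from pointwise convergence of the discrete arguments to pointwise convergence of the reconstructions: one must make sure that the null set on which convergence fails is a single null set independent of which of the finitely many arguments $U_i^n,U_i^{(1)},\dots,U_i^{(s)}$ one looks at, so that the continuity argument can be applied simultaneously to the whole tuple. This is immediate because a finite union of null sets is null, and because the stage reconstructions are built from the same numerical solution on the same space-time grid; I would just remark on it in one sentence. I expect no other difficulty: the theorem is essentially a clean "continuity plus consistency" wrapper around Lemma~\ref{lem:weights_MPRK}.
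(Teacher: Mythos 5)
Your proposal is correct and follows essentially the same route as the paper's proof: deduce $U^{(k),\{l\}}\to u$ a.e.\ from the uniform TTVRK bound together with the $L^1_{\mathrm{loc}}$ convergence, then apply continuity and consistency of the PWDs to identify the limits of $\pi^{(k),\{l\}}$ and $\sigma^{\{l\}}$ as $u$, so that Lemma~\ref{lem:weights_MPRK} applies. Your extra remarks (the telescoping argument for the stage convergence and the observation that a finite union of null sets is null) only make explicit what the paper leaves implicit.
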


\begin{rmk}
The assumptions made in the previous theorem are exactly those of \cite{IzginThesis} and \cite{avila_extension_2021} which cover common methods such as the ones presented in \cite{kopecz_order_2018,KM18Order3,offner2020arbitrary}.
     Altogether Theorem~\ref{thm:lxw_MPRK} shows that the PWDs from the literature already satisfy the assumption from Lemma~\ref{lem:weights_MPRK}.
\end{rmk}

In the following example, we illustrate that the assumption of Theorem~\ref{thm:lxw_MPRK} are fulfilled for a second-order family  of MP methods.

\begin{example}[Verification of PWD Hypothesis for MPRK22($\alpha$)]
    Consider the two stage second-order family of MPRK methods introduced in \cite{kopecz_order_2018} and denoted by MPRK22($\alpha$), where $\alpha\geq \frac12$ is a free parameter and the PWDs are given by $\pi_i^{(2)} = U_i^n$ and $\sigma_i = \left(U_i^n\right)^{1-\frac{1}{\alpha}}\left(U_i^{(2)}\right)^{\frac{1}{\alpha}}$.
    
    For $l \rightarrow \infty$ 
    we find $\gamma^{(2),\{l\}}(x,t) = \frac{U^{(2),\{l\}}(x,t)}{U^{n,\{l\}}(x,t)} \rightarrow 1$ and thus 
    $\sigma_i^{\{l\}} \rightarrow \left(U_i^n\right)^{1-\frac{1}{\alpha}}\left(U_i^n\right)^{\frac{1}{\alpha}} = U_i^n$ i.e. $\delta^{\{l\}} \rightarrow 1$ almost everywhere. 
\end{example}

\begin{rmk}[Lax--Wendroff proof: extension to Patankar-type RK sketch]\label{rmk:RK}
    The extension to RK methods with more stages differs only in the investigation of equation \eqref{eq:conv_numflux_lw}, where we have, instead of the MPE weighted numerical flux  \eqref{eq:flux_one_st}, the generalized version \eqref{eq:sum_flux}, i.e.,        $$\left\lvert\bar{F}_{g_{i+\frac12}}^{n,\delta,\{l\}}-f(U^{\{l\}}(x_i,t_n))\right\rvert$$
    which, using the definition of $\bar{F}_{g_{i+\frac12}}^{n,\delta,\{l\}}$ and $\sum_{r=1}^s b_r =1$, reads
    $$\left\lvert \sum_{r=1}^{s} b_r F_g^{\delta, \{l\}}\left(U^{(r),\{l\}}(\mathbf x_i,t_n^{(r)})\right)-f(U^{\{l\}}(x_i,t_n))\right\rvert \leq \sum_{r=1}^{s} \lvert b_r\rvert \left\lvert F_g^{\delta, \{l\}}\left(U^{(r),\{l\}}(\mathbf x_i,t_n^{(r)})\right)-f(U^{\{l\}}(x_i,t_n))\right\rvert.$$
    Thus, defining again $\mathbf x_i= (x_i-p\Delta x,\dots, x_i+q\Delta x)$, we take an arbitrary summand $r$ and decompose it analogously to the proof of Theorem~\ref{thm:LWMP}, which leads to
    \begin{align*}
        \left\lvert F_g^{\delta, \{l\}}\left(U^{(r),\{l\}}(\mathbf x_i,t_n^{(r)})\right)-f(U^{\{l\}}(x_i,t_n))\right\rvert \leq &\left\lvert \max \lbrace g(U^{(r),\{l\}}(\mathbf x_i,t_n^{(r)})),0 \rbrace \delta_{i}^{\{l\}} -\max \lbrace g(U^{(r),\{l\}}(\mathbf x_i,t_n^{(r)})),0 \rbrace  \right\rvert \\
        + &\left\lvert \min \lbrace g(U^{(r),\{l\}}(\mathbf x_i,t_n^{(r)})),0\rbrace  \delta_{i+1}^{\{l\}}-\min \lbrace g(U^{(r),\{l\}}(\mathbf x_i,t_n^{(r)})),0\rbrace \right\rvert \\
        + &\left\lvert g(U^{(r),\{l\}}(\mathbf x_i,t_n^{(r)}))  - f(U^{\{l\}}(x_i,t_n)) \right\rvert.
    \end{align*}
    The first two lines vanish reasoned by $\delta^{\{l\}}_i\rightarrow 1$ for $l \rightarrow \infty$, i.e., we need to prove that in the limit $l \rightarrow \infty$ the last summand
    \begin{align*}
        &\left\lvert g(U^{(r),\{l\}}(\mathbf x_i,t_n^{(r)}))  - f(U^{\{l\}}(x_i,t_n)) \right\rvert \\
        =& \left \lvert g(U^{(r),\{l\}}( x_i-q\Delta x,t_n^{(r)}),\dots,U^{(r),\{l\}}( x_i+p\Delta x,t_n^{(r)}))   \right\rvert\leq L \max_{-p\leq j \leq q} \left\lvert U^{(r),\{l\}}(x_i + j\Delta x ,t_n^{(r)}) - U^{\{l\}}(x_i,t_n)\right\rvert \rightarrow 0,
    \end{align*}
    for a.e. $x_i$,
    where, independently of the argument, we can apply the Lipschitz continuity of $g$ with Lipschitz constant $L$ to get the estimation.
    
    Note in this case the RK stages are arguments of the numerical flux. Therefore, we first need the TTVRK boundedness property to apply Lemma~\ref{lem:weights_MPRK} and ensure that the stages converge to the same solution $\lim_{l\to\infty}U^{(r),\{l\}}(x_i + j\Delta x ,t_n^{(r)}) - U^{\{l\}}(x_i + j\Delta x, t_n)=0$ and proceed with the proof of the original Lax--Wendroff theorem to obtain the desired result.
\end{rmk}

\begin{rmk}[Lax--Wendroff for NSARK]
In the case of MP methods we assumed in Lemma~\ref{lem:weights_MPRK} and Theorem~\ref{thm:lxw_MPRK} that the PWDs fulfill certain properties to conclude that the NSW grid functions $\bm{\delta}^{\{l\}},\bm\gamma^{(k),\{l\}}$ satisfy
\begin{equation}\label{eq:NSW->1}
    \lim_{l\to\infty}\bm{\delta}^{\{l\}}(x,t)=1\quad \text{ and }\quad \lim_{l\to\infty}\bm\gamma^{(k),\{l\}}(x,t)=1
\end{equation}almost everywhere. This property represents the main ingredient in the proof outlined in Remark~\ref{rmk:RK}. Indeed, to extend the Lax--Wendroff theorem also to general NSARK schemes with general solution-dependent coefficients of the form \eqref{eq:NSW}, it is sufficient to add the condition \eqref{eq:NSW->1} to the classical assumptions of the Lax--Wendroff theorem. In the case of MPRK schemes, we used the TTVRK boundedness together with special properties of the PWDs (see Theorem~\ref{thm:lxw_MPRK}) to deduce \eqref{eq:NSW->1} in Lemma~\ref{lem:weights_MPRK}.
\end{rmk}

\begin{rmk}[On the TTV boundedness assumption]
    The difference with respect to the classical Lax--Wendroff theorem consists of the assumption that the numerical solutions are TTV bounded uniformly in space and in the limit process. The authors could not easily remove this assumption to prove the Lax--Wendroff theorem.
    Different strategies have been pursued but all of them ended up requiring some extra hypotheses.
    In particular, the technique proposed by Birken and Linders \cite{birken_conservation_2022} shows that, using iterative methods for implicit schemes, the Lax--Wendroff theorem can be applied also in the context of Patankar-type methods. Nevertheless, in their work the case where both the limit of iterations of the linear solver and the mesh refinement level go to infinity is not treated. In particular, it is hard to prove the $\lim_{l \to \infty} \lim_{z \to \infty}$  case, where $l$ is the mesh refinement level and $z$ is the linear solver iteration. Indeed, this leads to a dependency of the composed numerical flux in one step of the RK on the whole domain. This destroys the argument used in the classical Lax--Wendroff theorem to prove the flux convergence almost everywhere.
\end{rmk}

The hypotheses of being TVB in space and time for a given spatial and temporal discretization is not trivial to study, as one needs to find a bound uniformly with the mesh refinement, which in a discrete setting is virtually impossible to check.
What one can do in some cases is to prove a stronger result, i.e., the total variation diminishing (TVD) (in space) property.
In \ref{sec:TVD}, we study the TVD property of the MPE scheme applied to an upwind first order FV spatial discretization for the nonviscous Burgers' equation. We also propose some numerical tests to validate the findings.

\section{Numerical results}\label{se_numerics}

In this section, we will run some simple Riemann problem tests to validate the convergence towards exact solutions. What we are looking for is to test the regimes where the MP weights are large and change significantly the scheme with respect to explicit ones. Clearly, this happens close to discontinuity and close to 0 values of the solution, while everywhere else the MP weights quickly converges to 1. Hence, we choose to test Riemann problems with very low value of one of the two sides.

We will start from first order methods with the MPE time discretization and then we will proceed with higher order discretizations.  

\subsection{First order tests}
\subsubsection{Burgers' equation}
We start the discussion with Burgers' equation 
\begin{equation}\label{eq:Burgers}
\partial_t u +\partial_x \left(\frac{u^2}{2}\right)=0,\ \qquad x \in [-1,1],\, t\in[0,T],
\end{equation}
with a double Riemann problem for initial conditions, i.e.,
\begin{equation}\label{eq:RP}
    u(x,0) = \begin{cases}
        u_1, &\text{ if }-0.5<x<0.5,\\
        u_2, &\text{ else},
    \end{cases}
\end{equation}
with both $u_1,u_2>0$ and periodic boundary conditions. The exact solution of this problem if $u_1>u_2$ is a discontinuity traveling at speed $c = \frac{u_1^2-u_2^2}{2(u_1-u_2)}$ on the right and a rarefaction wave on the left.
For the spatial discretization, we use a simple upwind numerical flux that, given the positivity of the solution, will sum up to $g_{i+\frac12}(U^n) = U^n_{i}$.
In \ref{sec:TVD}, we show that MPE with the upwind spatial discretization is TVD for an implicit CFL bound: $\text{CFL}<2$.

To check the convergence of the scheme, we look at a very strong Riemann problem where $u_1=10^4$ and $u_2=10^{-30}$, so that we expect the MP weights to be particularly large. 
We run some simulations with different number of cells $N$ at CFL = 1 and CFL = 2.1. For each of these simulations we look for the interface with largest derivative and we set this point to be the ``numerical shock location'' $s_N$. We measure then $|s_N-s|$, where $s$ is the exact shock location. For both CFLs (so, also in a non TVD regime), we obtain that the numerical shock locations converge linearly to the exact shock location. 

\begin{figure}
    \centering
    \includegraphics[width=0.48\linewidth]{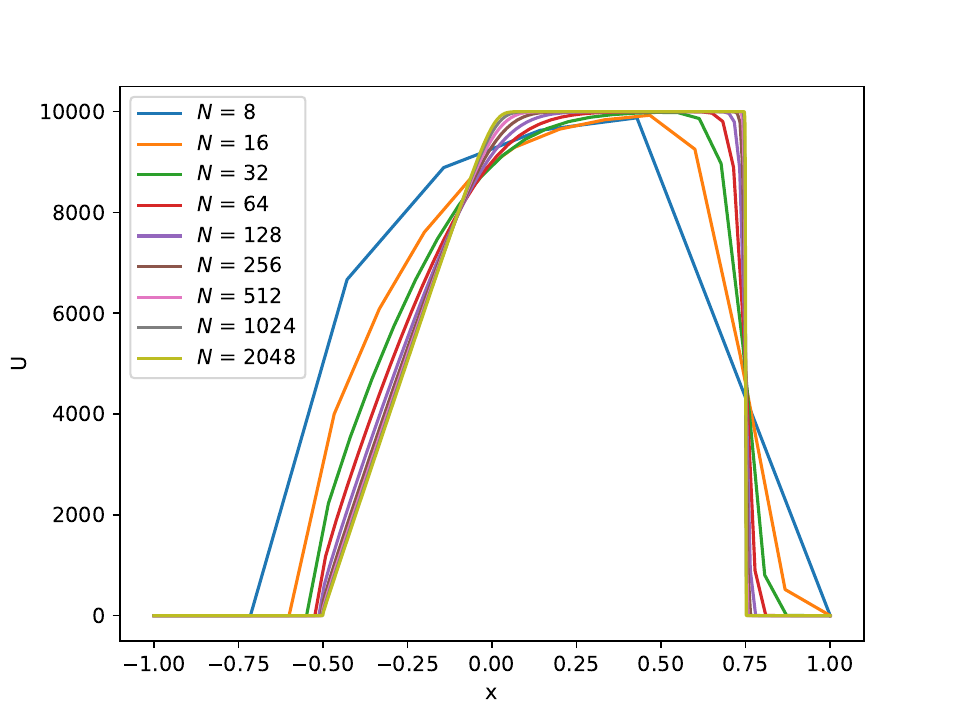}
    \includegraphics[width=0.48\linewidth]{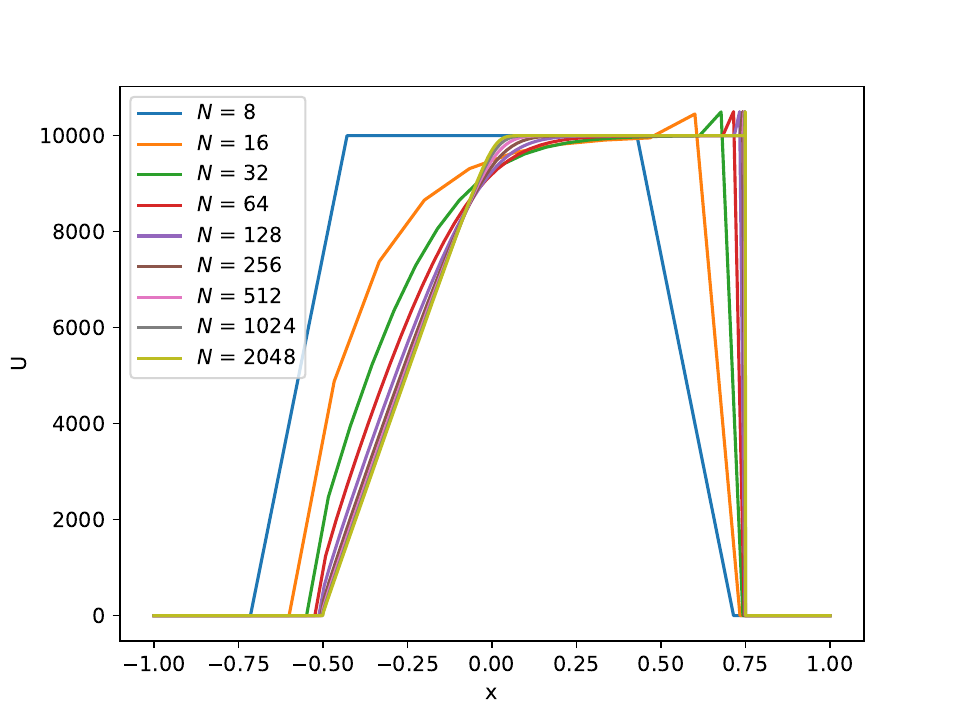}\\
    \includegraphics[width=0.48\linewidth]{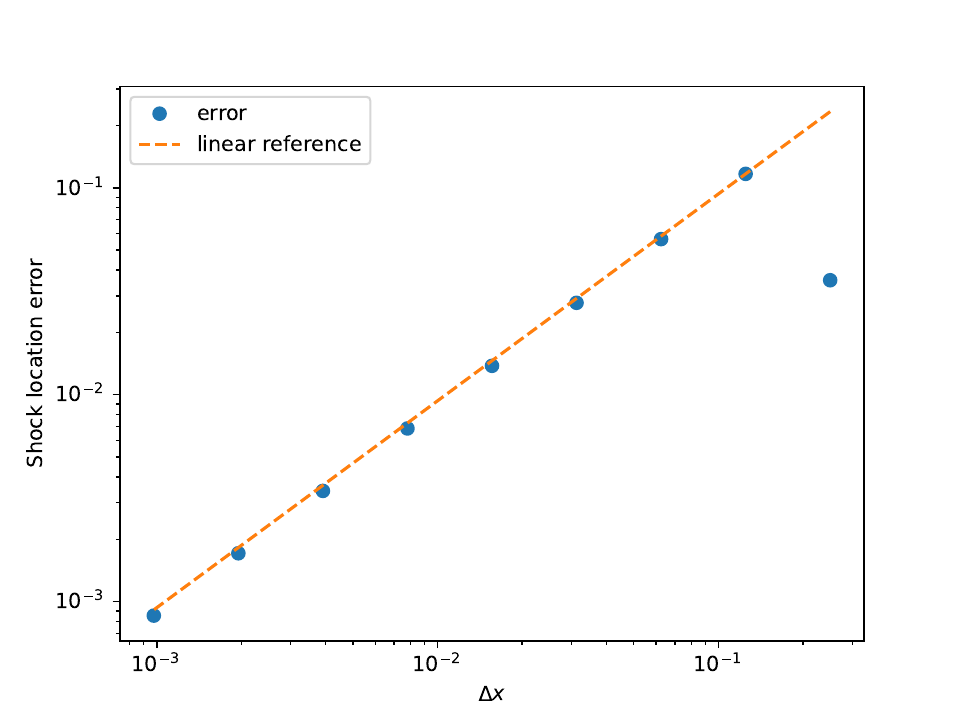}
    \includegraphics[width=0.48\linewidth]{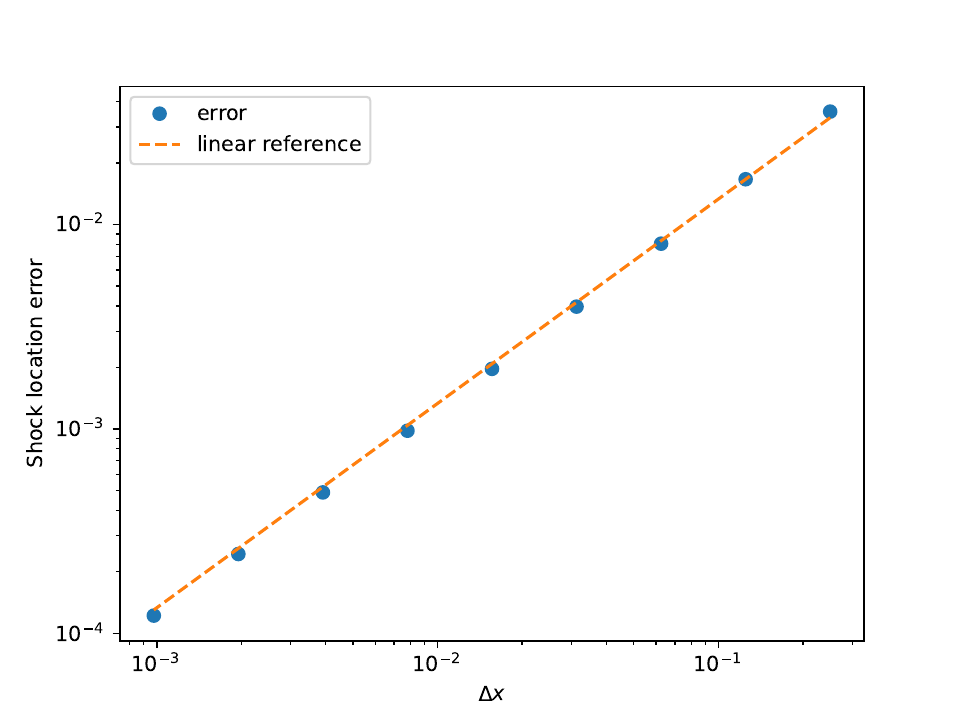}
    \caption{Simulations (top) and numerical shock location errors (bottom) for Burgers' equation \eqref{eq:Burgers} with upwind numerical flux and MPE. CFL=1 on the left and CFL=2.1 on the right}
    \label{fig:burgers_shock_location}
\end{figure}

In Figure~\ref{fig:burgers_shock_location}, we show the simulations and the error of the shock locations, proving the convergence of the numerical solutions to the exact solution of the problem.

\subsubsection{Buckley--Leverett equation}
\begin{figure}
    \centering
    Explicit Euler\\
    \includegraphics[width=0.32\linewidth]{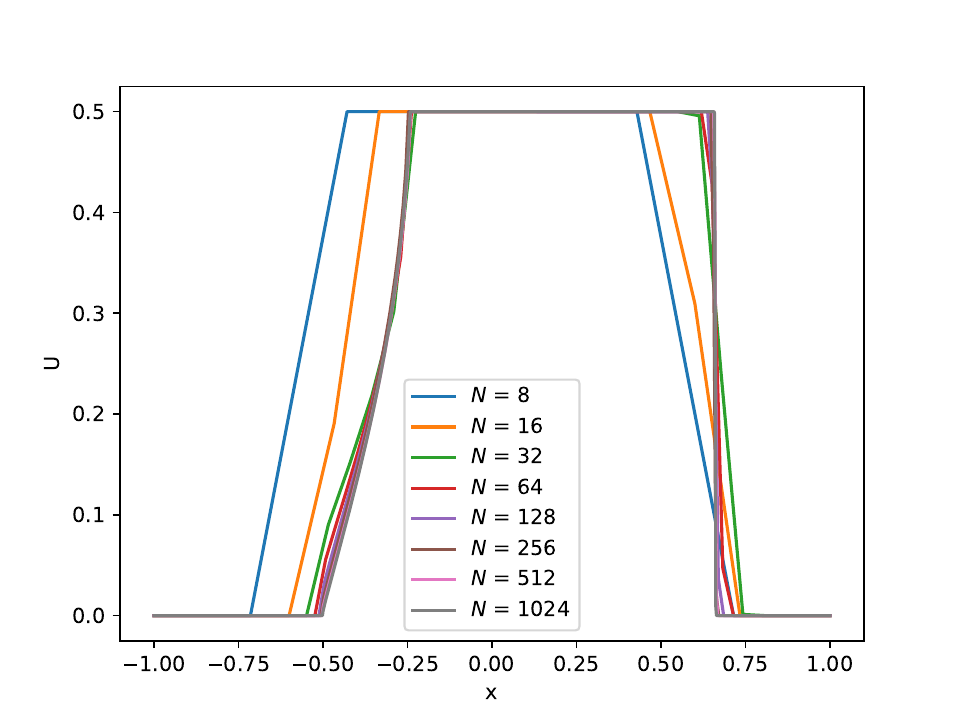}
    \includegraphics[width=0.32\linewidth]{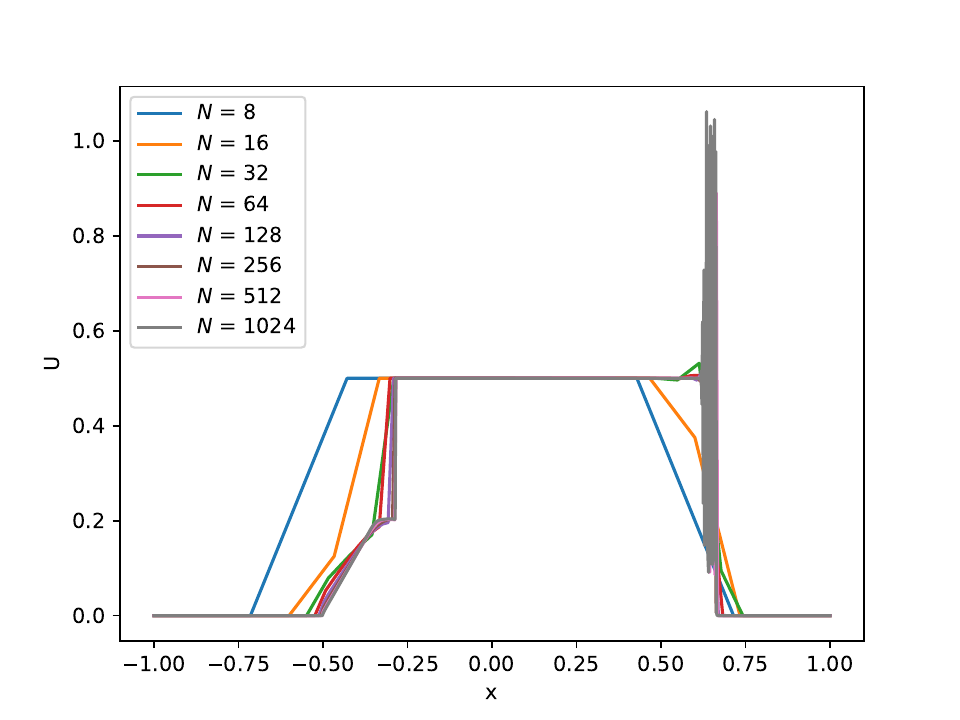}
    \includegraphics[width=0.32\linewidth]{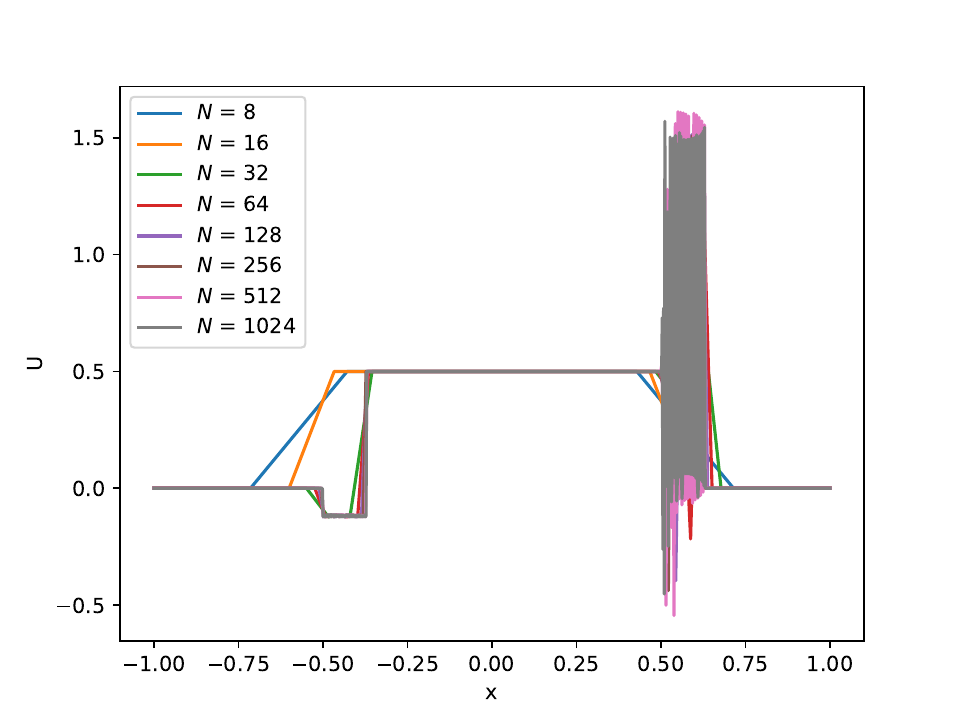}\\
    \includegraphics[width=0.32\linewidth]{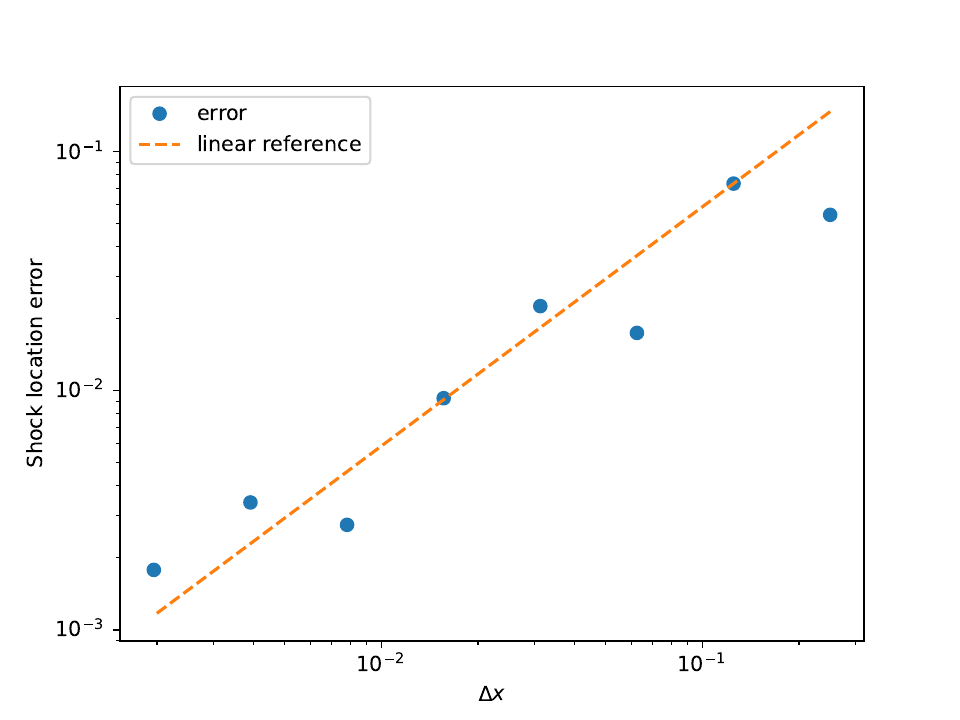}
    \includegraphics[width=0.32\linewidth]{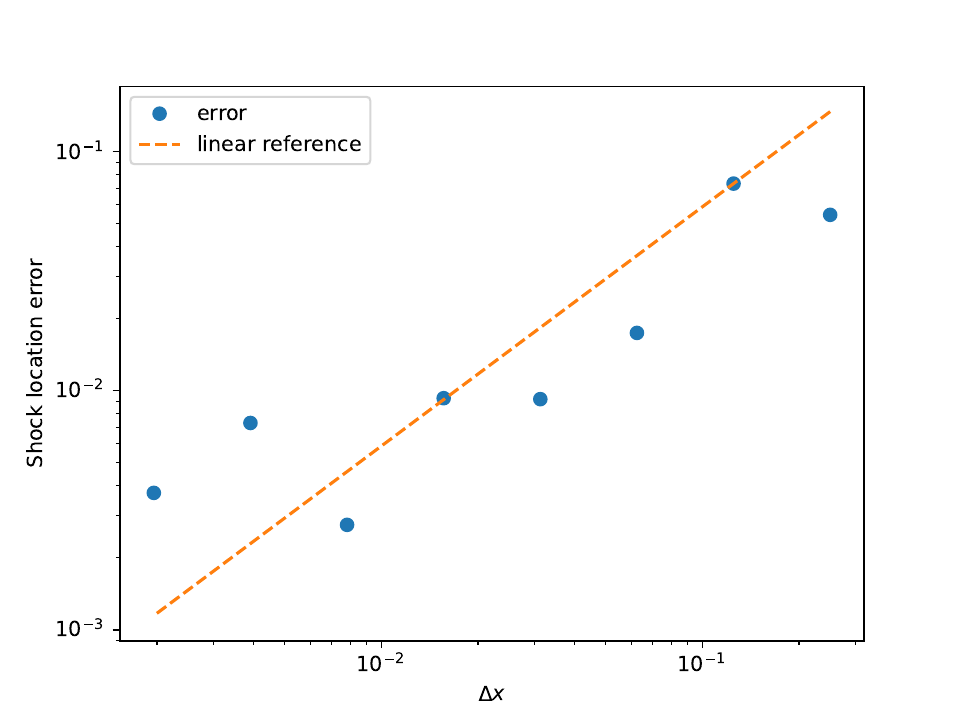}
    \includegraphics[width=0.32\linewidth]{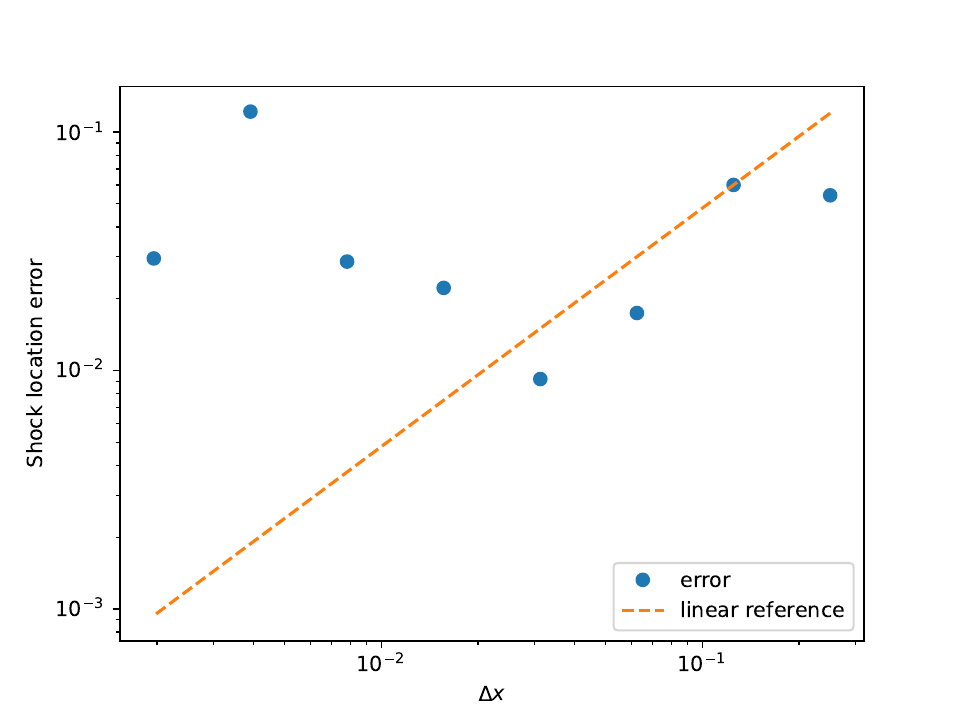}\\
    MPE\\
    \includegraphics[width=0.32\linewidth]{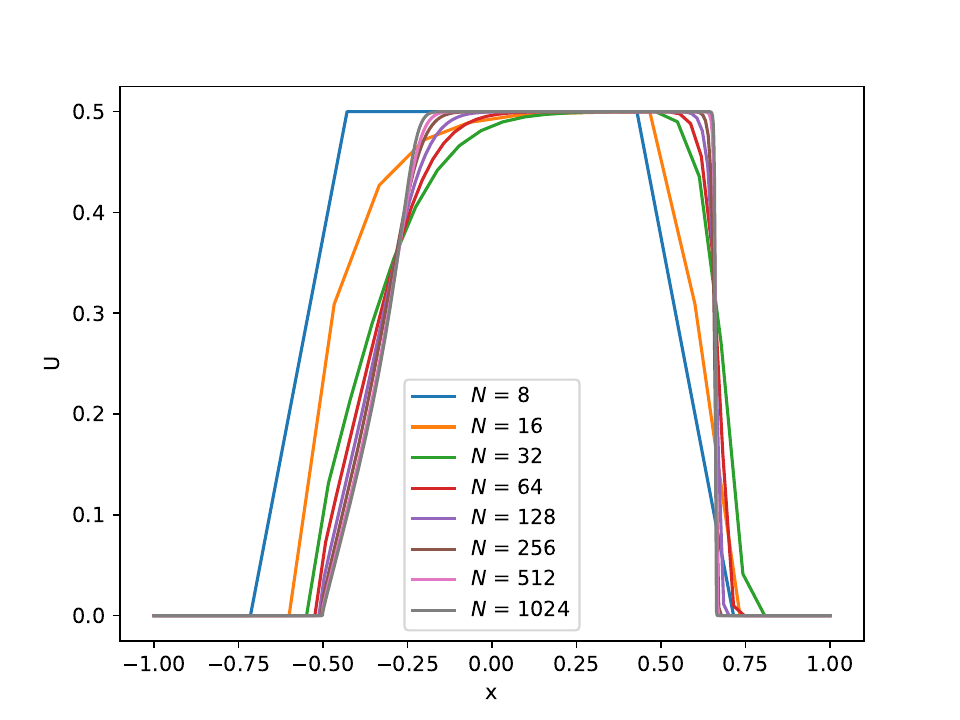}
    \includegraphics[width=0.32\linewidth]{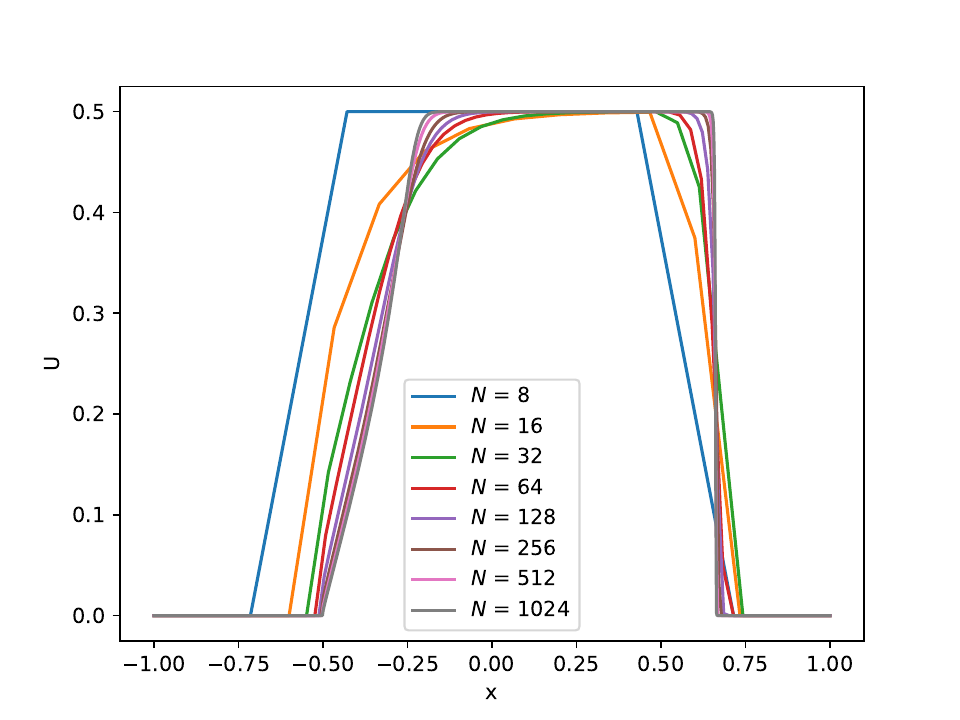}
    \includegraphics[width=0.32\linewidth]{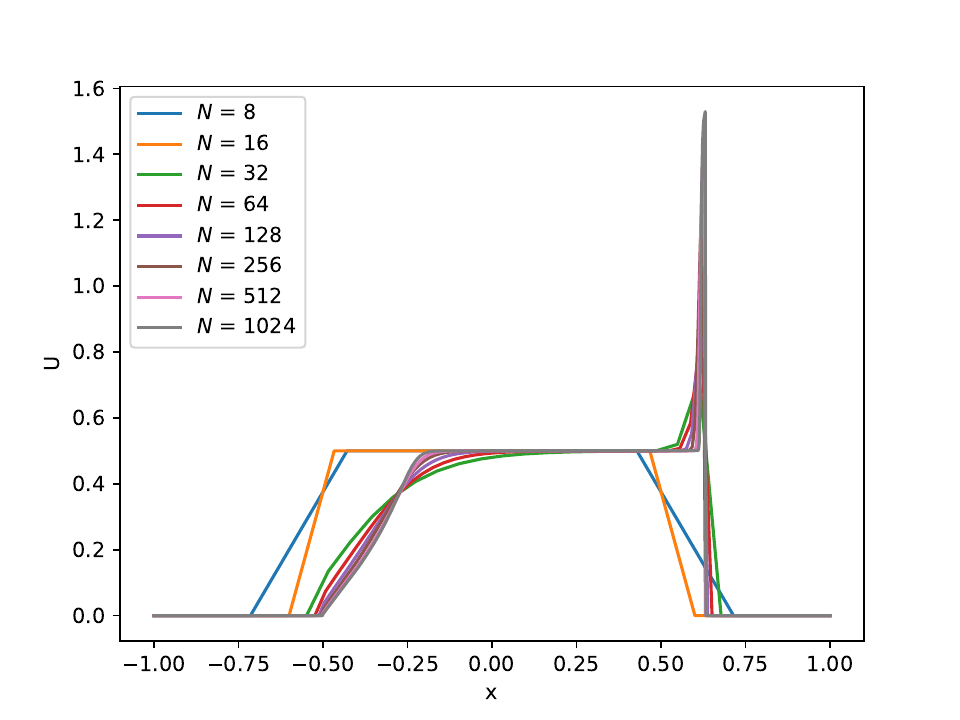}\\
    \includegraphics[width=0.32\linewidth]{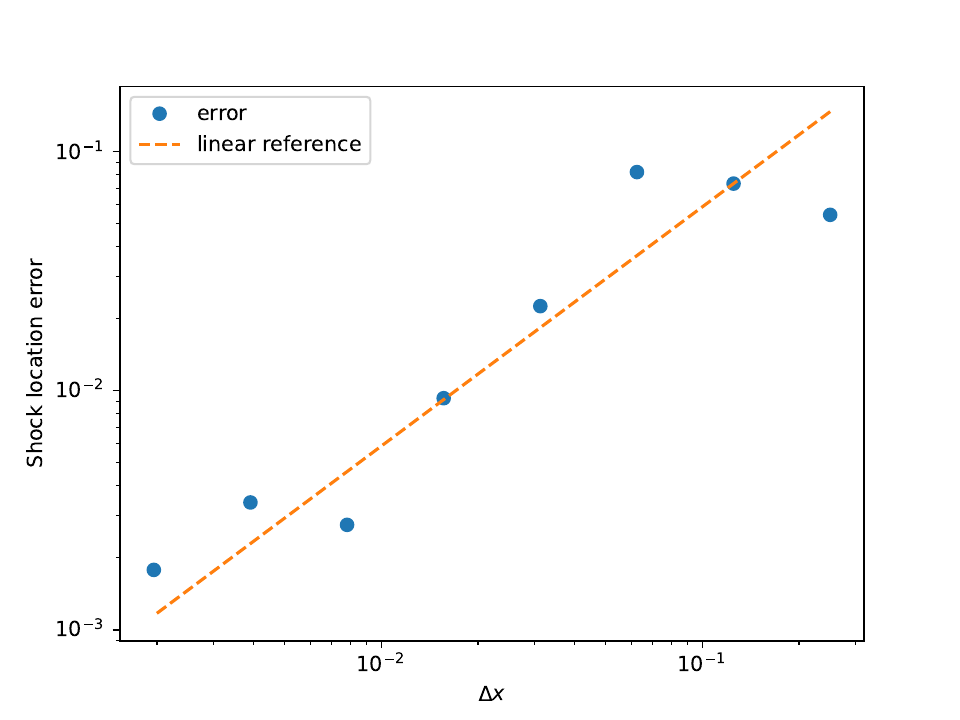}
    \includegraphics[width=0.32\linewidth]{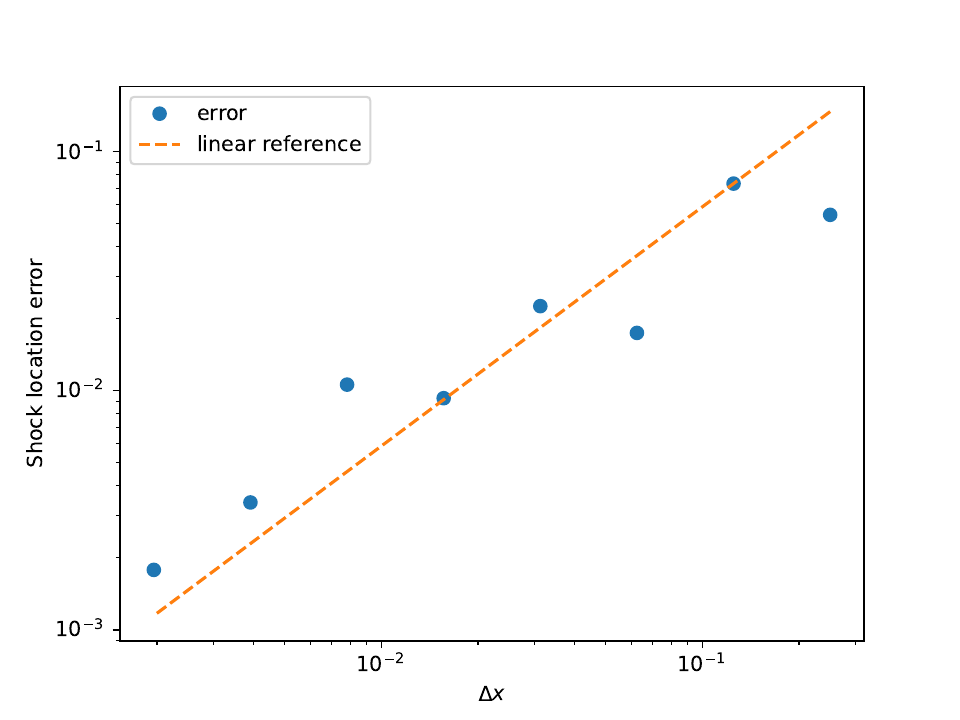}
    \includegraphics[width=0.32\linewidth]{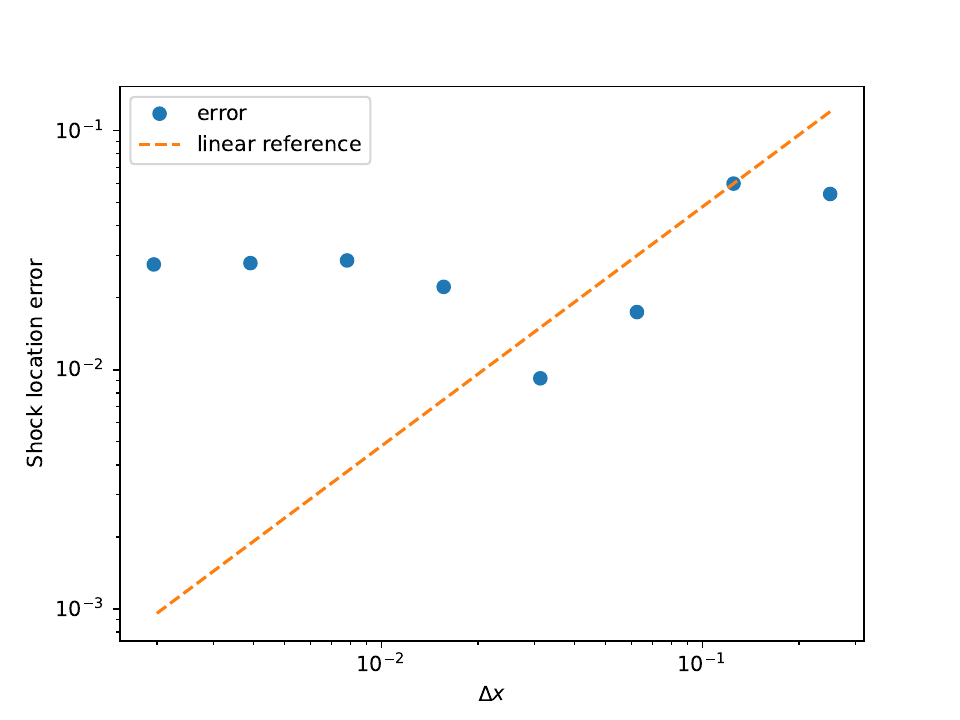}
    \caption{Simulations for explicit Euler (first line), MPE (third line) and numerical shock location errors for explicit Euler (second line) and MPE (last line) for Buckley--Leverett equation \eqref{eq:Buckley} with upwind numerical flux and MPE. $\text{CFL}=0.99$ (left), $\text{CFL}=1.20$ (center) and $\text{CFL}=1.99$ (right)}
    \label{fig:buckley_shock_location}
\end{figure}
Then, we test the Buckley--Leverett equation that approximates a phase concentration in multiphase flows, i.e.,
\begin{equation}\label{eq:Buckley}
    \partial_t u + \partial_x f(u) =0, \qquad \text{with } f(u)= \left( \frac{u^2}{u^2+a(1-u^2)}\right)
\end{equation}
with the IC given by the Riemann problems \eqref{eq:RP}, with $u_1=0.5$ and $u_2=10^{-30}$ and $a=0.5$. Again, the right discontinuity travels with speed $c=\frac{f(u_1)-f(u_2)}{u_1-u_2}$, while a rarefaction fan spreads from the left discontinuity.

In this case, we could prove that the scheme is TVD and, from the numerical simulations, we guess that the MPE is TVD for $\text{CFL}\leq 1$, as well as the explicit Euler method.

In Figure~\ref{fig:buckley_shock_location}, we show the simulations of explicit Euler and MPE at various $N$ for $\text{CFL}=0.99$ (left), $\text{CFL}=1.2$ (center) and $\text{CFL}=1.99$ (right). Below each figure, we also plot the error of the numerical shock location. 
MPE clearly has some qualitative improvements with respect to explicit Euler, in particular for $\text{CFL}=1.2$ where we are outside the stability region for the explicit method, but we can still obtain reliable simulations for MPE. In the explicit method, we observe not only oscillations close to the shock, but also a wrong weak solution at the rarefaction wave (not the entropy solution), while the MPE gives correct results at both waves.
Numerically, we have seen that MPE seems to be TVD up to $\text{CFL}=1$, while explicit Euler is not TVD  for CFL larger than 1. 
In any case, in the regime where explicit Euler converges to the exact solution, we also have that the MPE converges to the exact solution.

\subsection{High order tests}
In this section, we test with higher order schemes the findings of this work. We will first have a high order in time and upwind first order space reconstruction and then we will move to high order in space and time using a WENO spatial reconstruction.

\subsubsection{High order in time}
To test the high order in time Patankar-type schemes, we will use the MPDeC with different order of accuracy, while still using an upwind first order discretization. 
\begin{figure}
    \centering
    \begin{minipage}{0.32\textwidth}
    \centering
        MPDeC(2)
    \end{minipage}\begin{minipage}{0.32\textwidth}
    \centering
        MPDeC(3)
    \end{minipage}\begin{minipage}{0.32\textwidth}
    \centering
        MPDeC(4)
    \end{minipage}\\
    CFL = 1\\
    \includegraphics[width=0.32\linewidth]{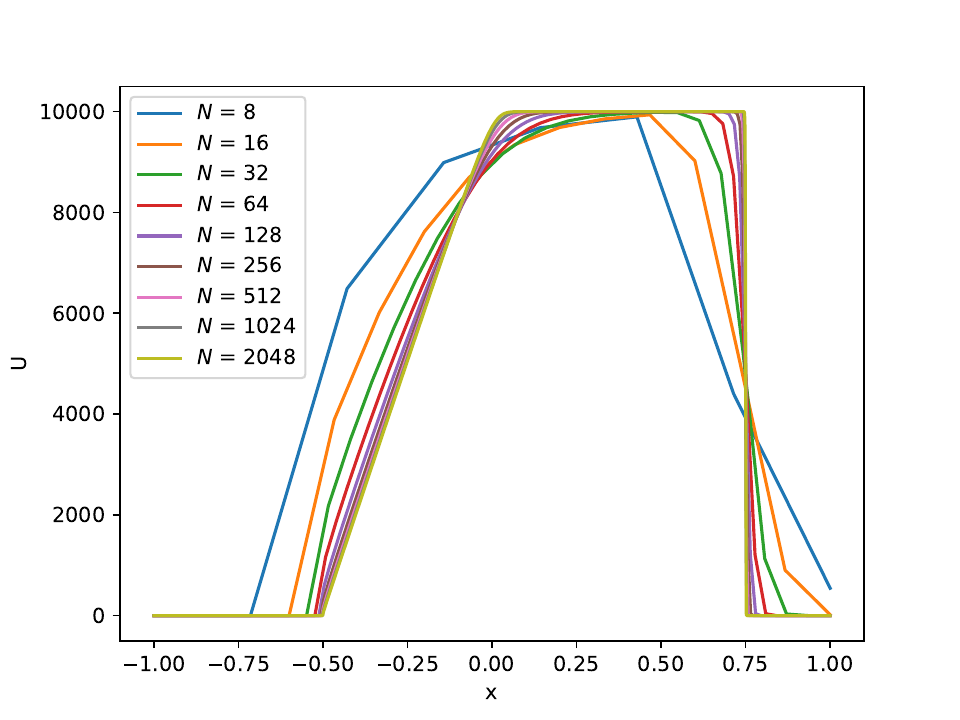}
    \includegraphics[width=0.32\linewidth]{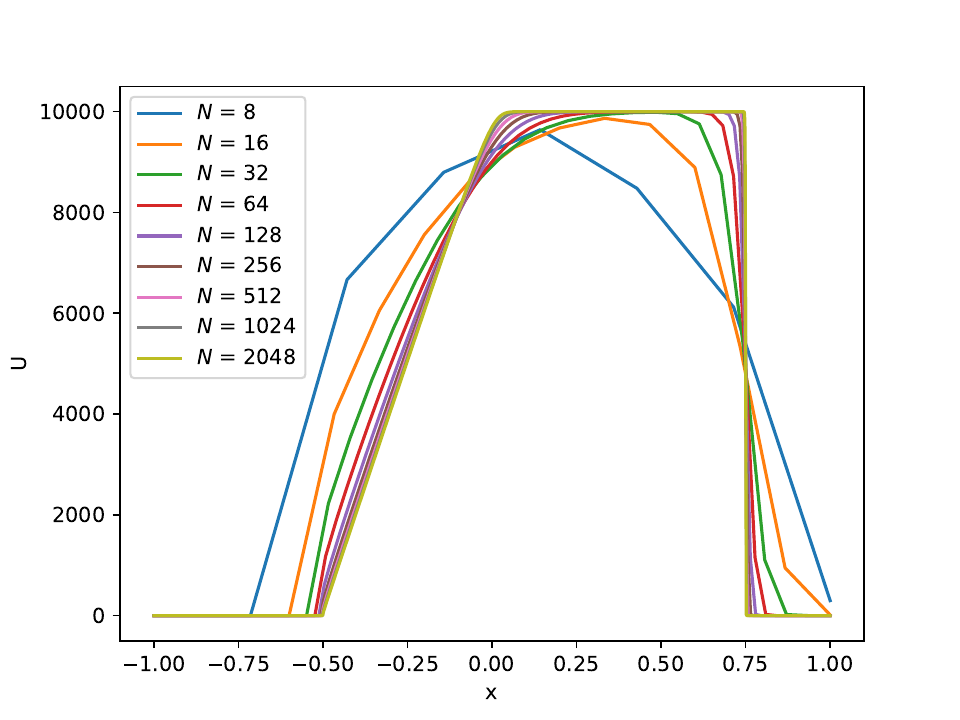}
    \includegraphics[width=0.32\linewidth]{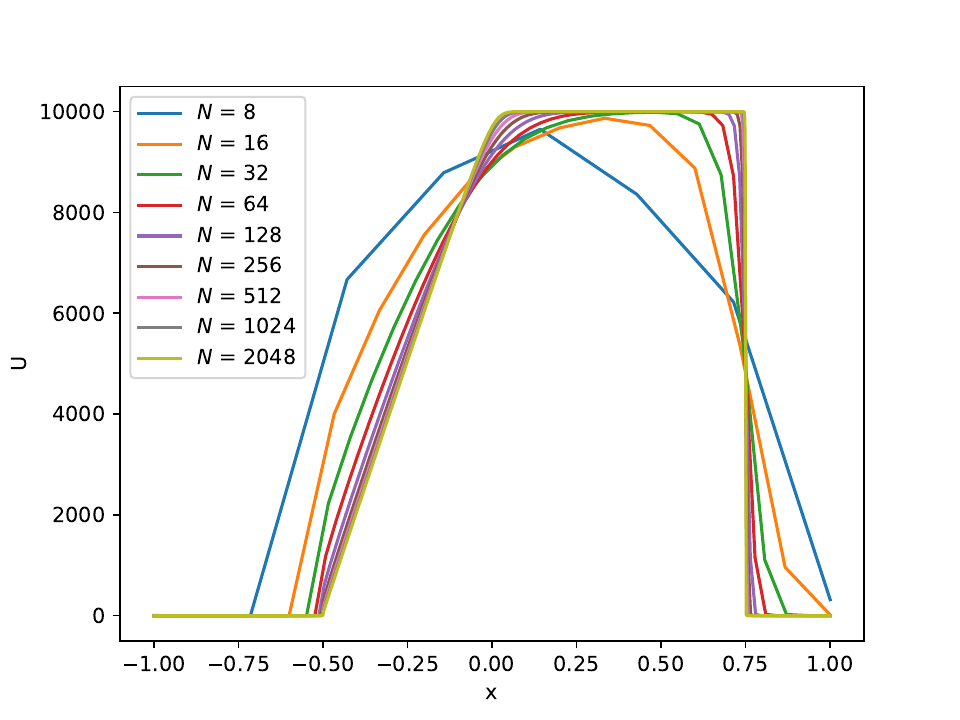}\\
    \includegraphics[width=0.32\linewidth]{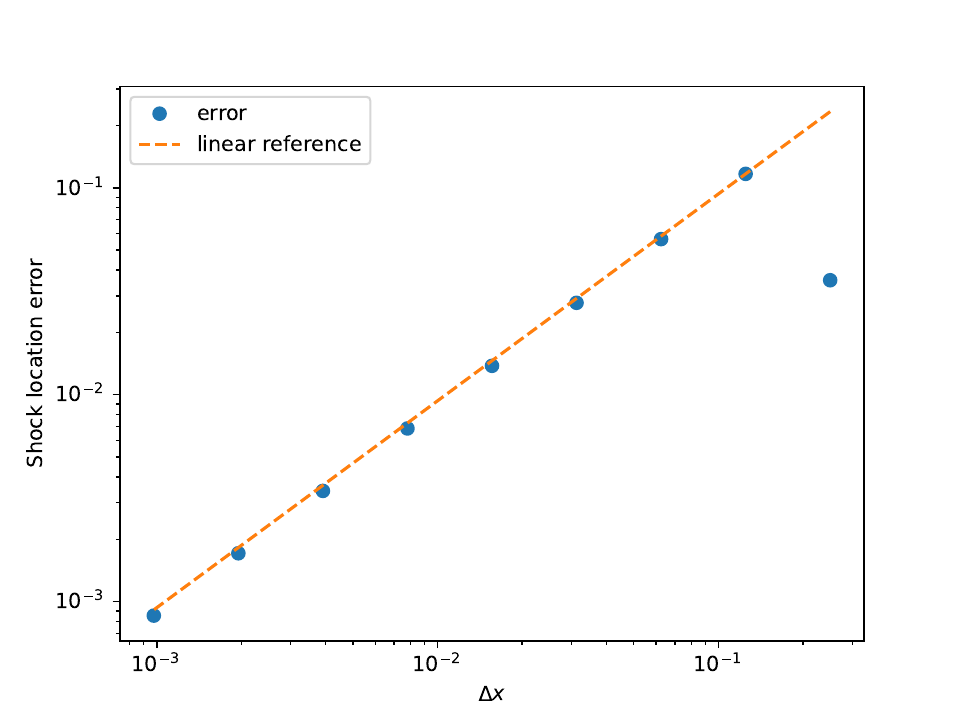}
    \includegraphics[width=0.32\linewidth]{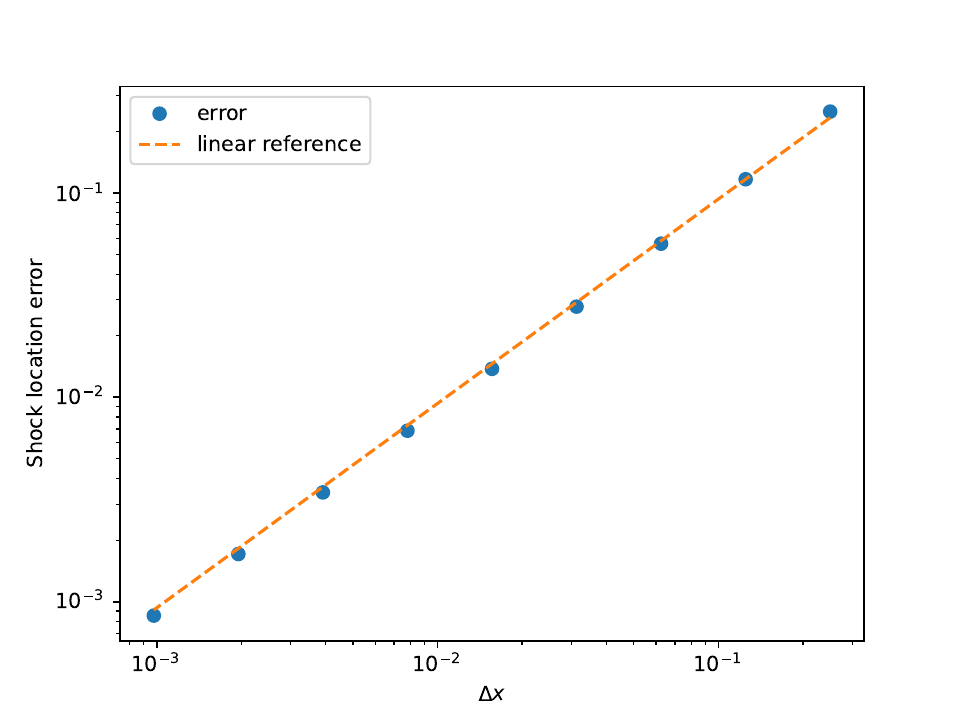}
    \includegraphics[width=0.32\linewidth]{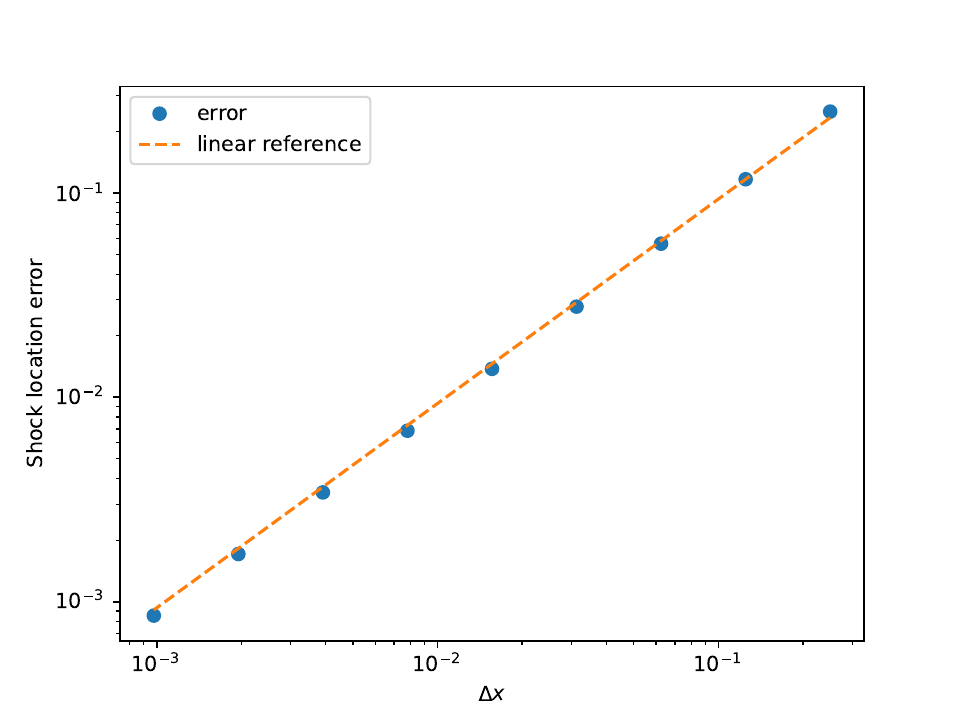}\\
    CFL = 2.1\\
    \includegraphics[width=0.32\linewidth]{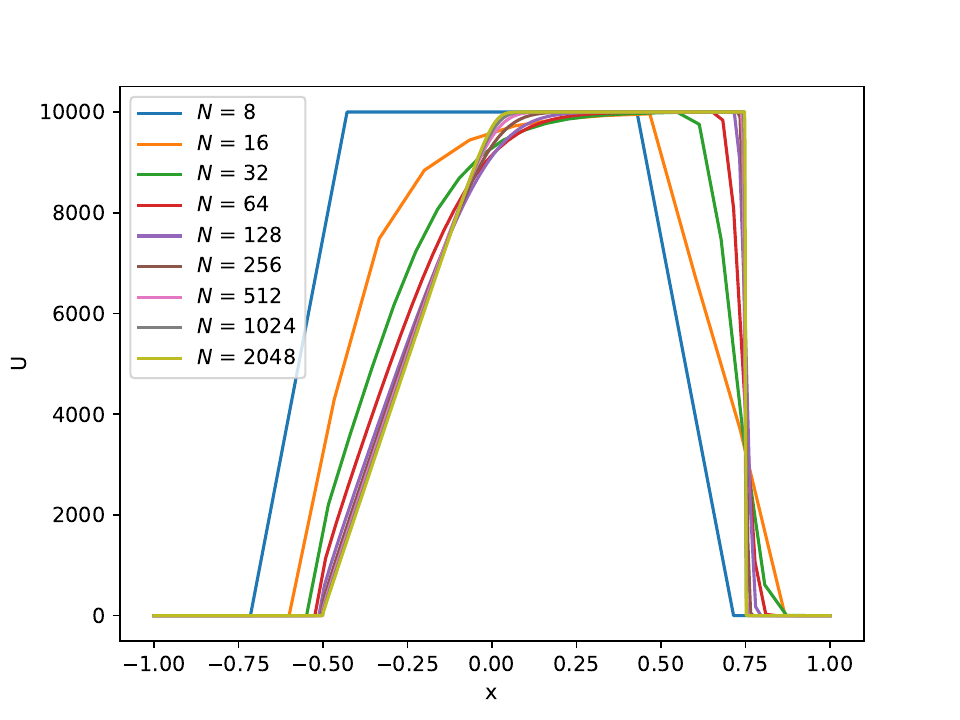}
    \includegraphics[width=0.32\linewidth]{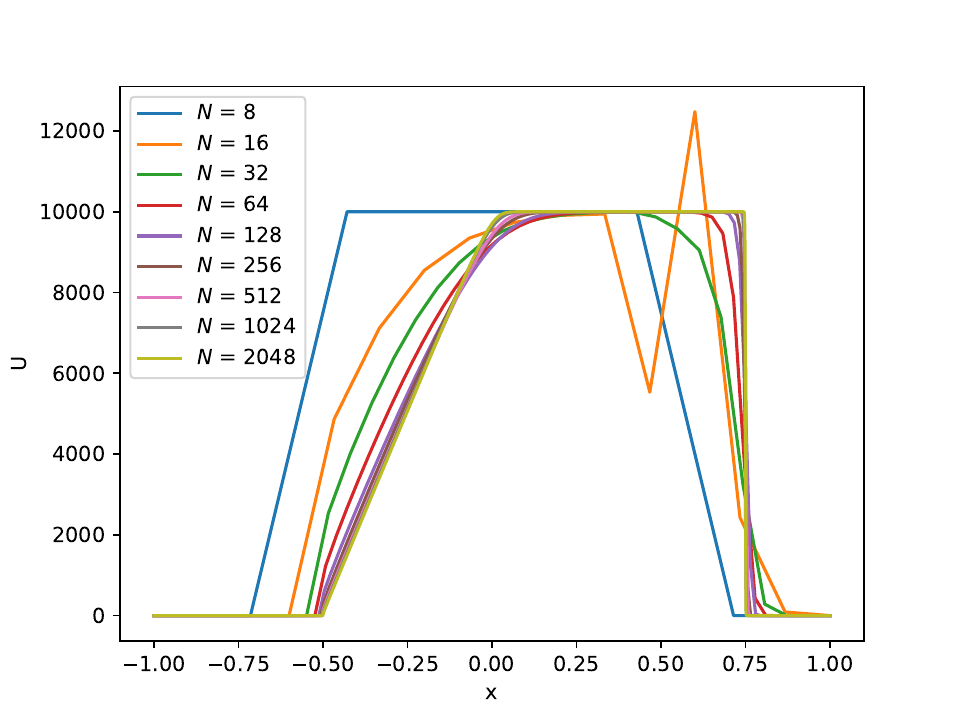}
    \includegraphics[width=0.32\linewidth]{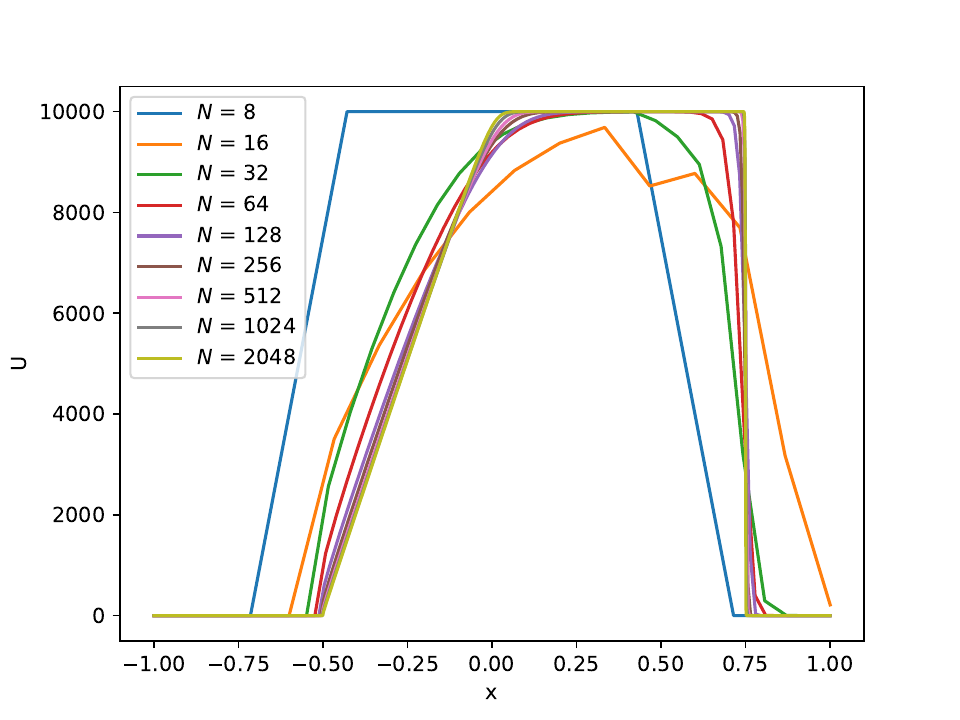}\\
    \includegraphics[width=0.32\linewidth]{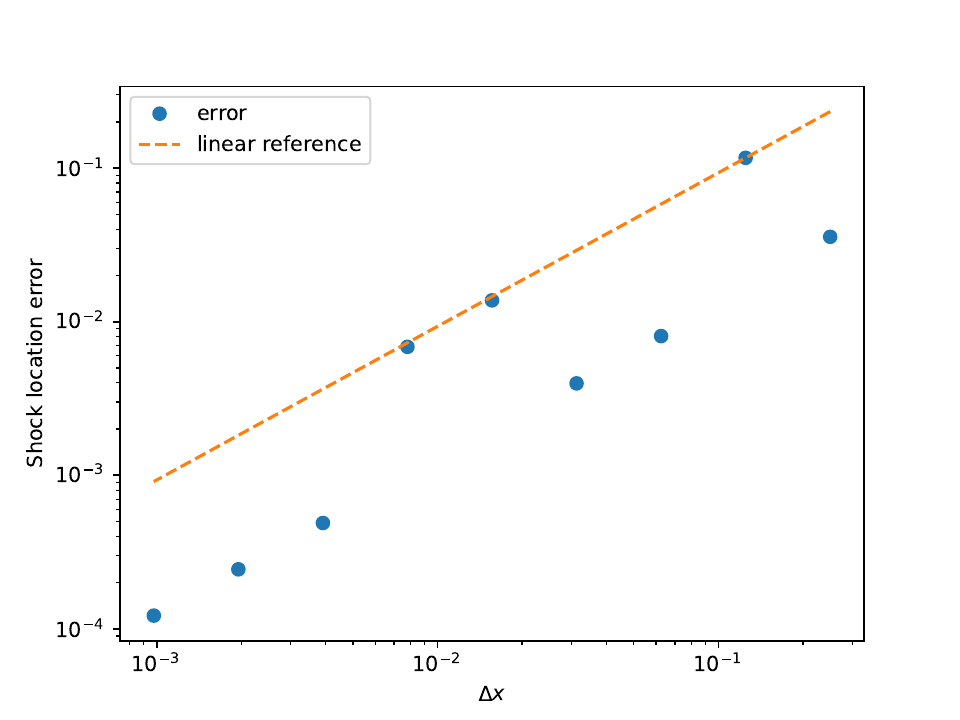}
    \includegraphics[width=0.32\linewidth]{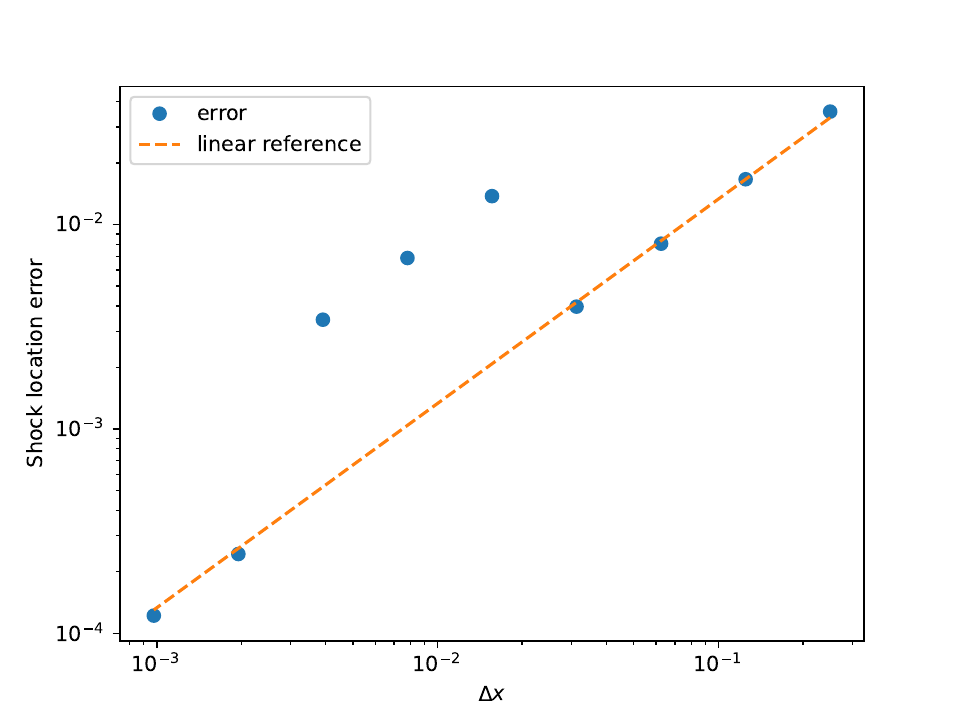}
    \includegraphics[width=0.32\linewidth]{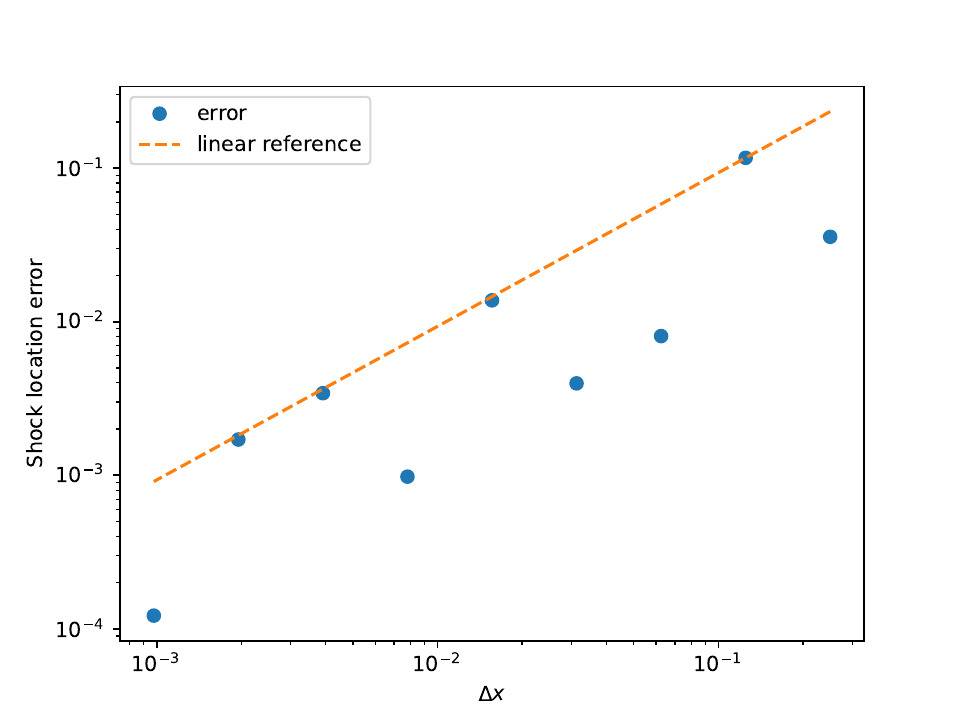}
    \caption{Burgers' equation \eqref{eq:Burgers} simulations and error of the shock location with MPDeC of order 2 (left), 3 (center) and 4 (right) and upwind numerical flux with different CFLs}
    \label{fig:burgers_mpdec_upwind}
\end{figure}
In Figure~\ref{fig:burgers_mpdec_upwind}, we show the simulations for the same double Riemann problem studied in the previous section. 
As for the MPE, we notice a good convergence towards the exact solution, in particular in the shock location at an $O(\Delta x)$ rate. 
For $\text{CFL}=2.1$ there is still convergence, even if the shock location does not converge monotonically to the exact one. It seems that there are two first-order error lines on which the error bounces.

Moreover, numerically, we can observe that also the second order MPDeC(2) for Burgers with upwind seems TVD up to CFL 2, while higher order MPDeC are never TVD even for very small CFLs.

\begin{figure}
    \centering
    \includegraphics[width=\linewidth]{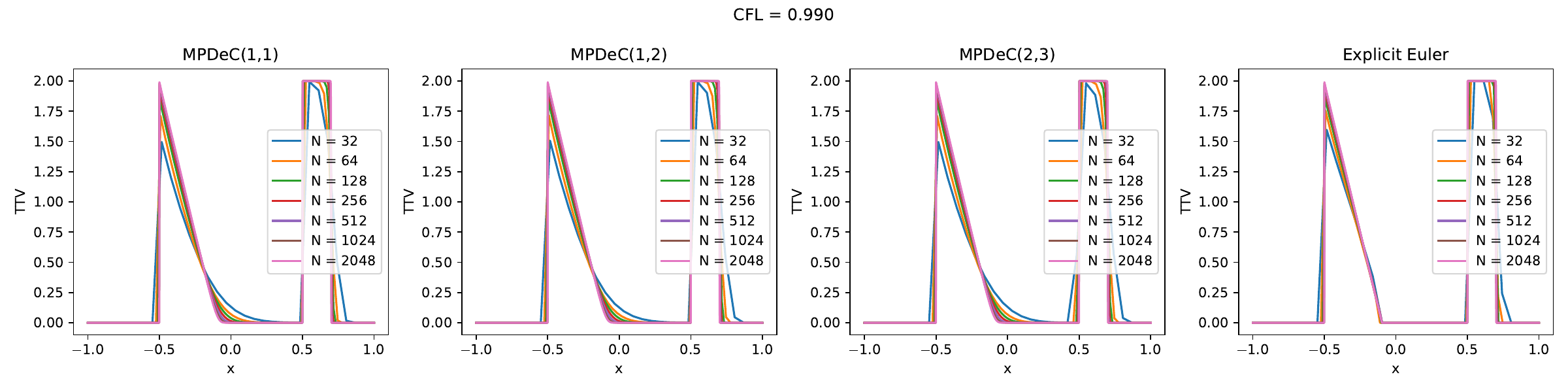}
    \includegraphics[width=\linewidth]{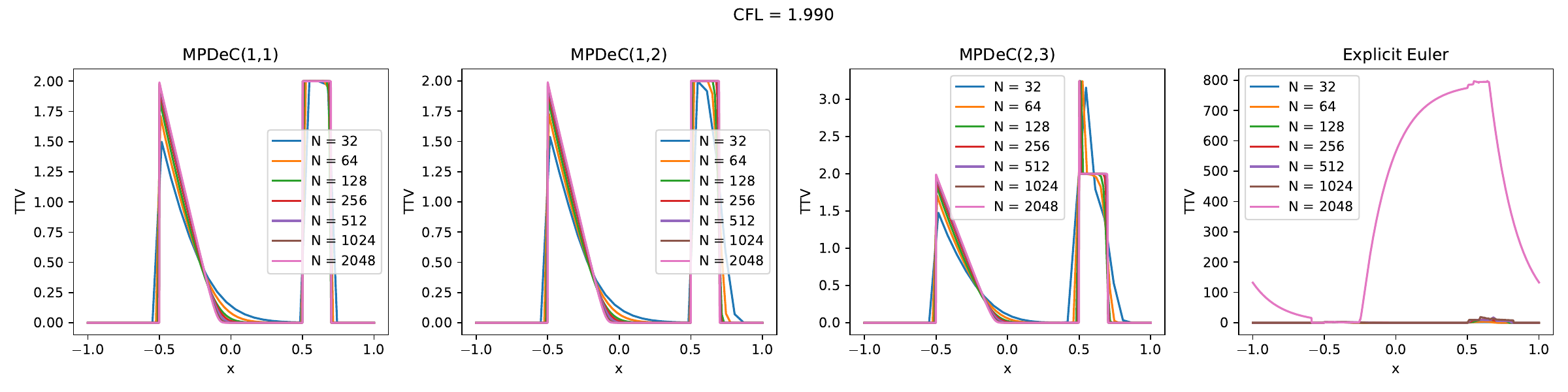}
    \includegraphics[width=\linewidth]{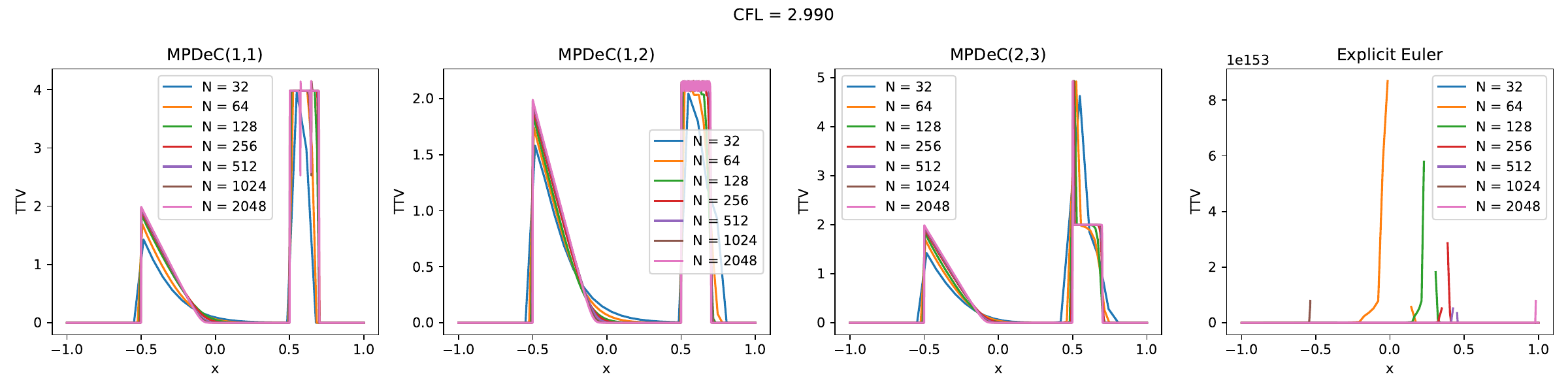}
    \includegraphics[width=\linewidth]{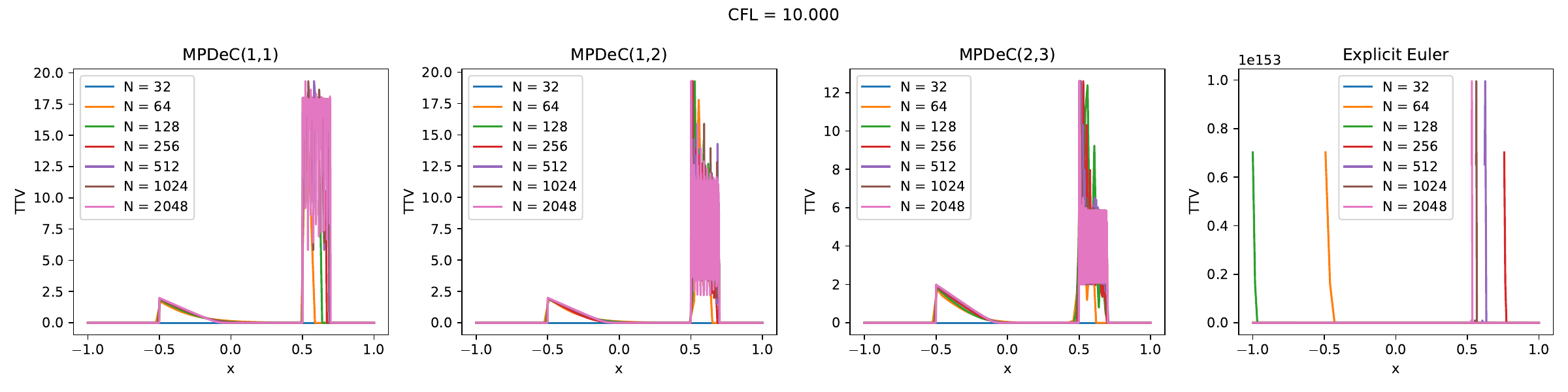}
    \caption{Total time variation for various methods and different CFL number for Burgers' equation \eqref{eq:Burgers} with IC \eqref{eq:RP} with $u_1=2$ and $u_2=10^{-13}$. Observe that for different methods different scales are used.}
    \label{fig:TTV}
\end{figure}

In figure~\eqref{fig:TTV}, we check the TTV boundedness for different time discretization methods at different CFL number (0.99, 1.99, 2.99, 10) for a double RP \eqref{eq:RP} with $u_1=2$ and $u_2=10^{-13}$. The TTV boundedness is the hypothesis that we added to the Lax--Wendroff theorem in order to be proven for Patankar-type methods. Here, we check whether this hypothesis is reasonable for such methods.
In particular, we observe that the MP methods seem to have bounded TTV also for very large CFLs, even if the bound is larger than the exact solution one (which is bounded by 2). This does not prove that the method is uniformly bounded for large CFLs, but it gives good hopes, while for low CFLs it seems that the TTV is bounded by the exact TTV.
On the other side, the explicit Euler method is TTV only for $\text{CFL}\leq 1$, while for larger CFLs has an exploding behavior.

\subsubsection{High order in space and time with WENO-FV}

\begin{figure}
\begin{center}    Burgers' equations \end{center}
	\includegraphics[width=0.295\textwidth, trim={0 0 100 0}, clip]{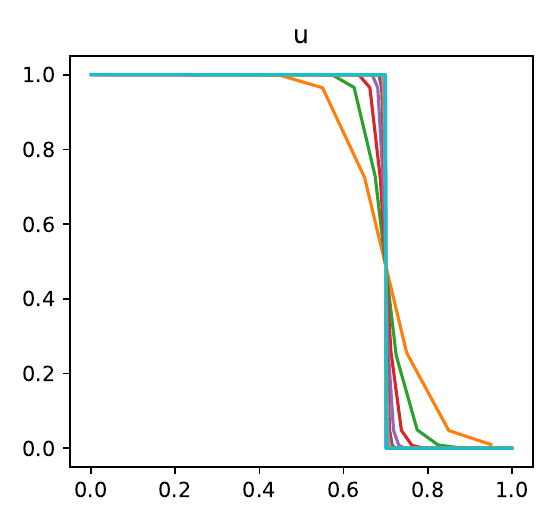}
	\includegraphics[width=0.295\textwidth, trim={0 0 100 0}, clip]{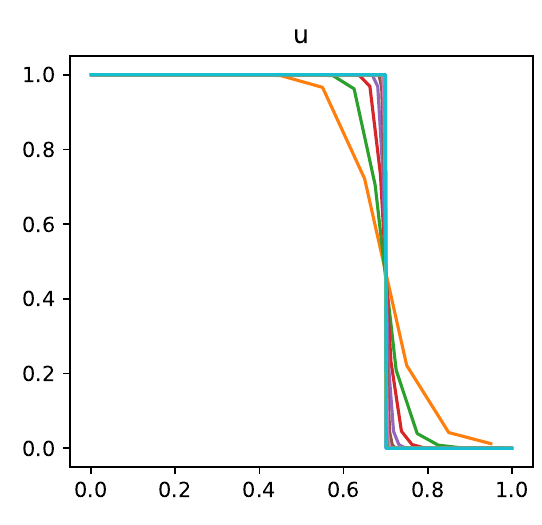}
	\includegraphics[width=0.295\textwidth, trim={0 0 100 0}, clip]{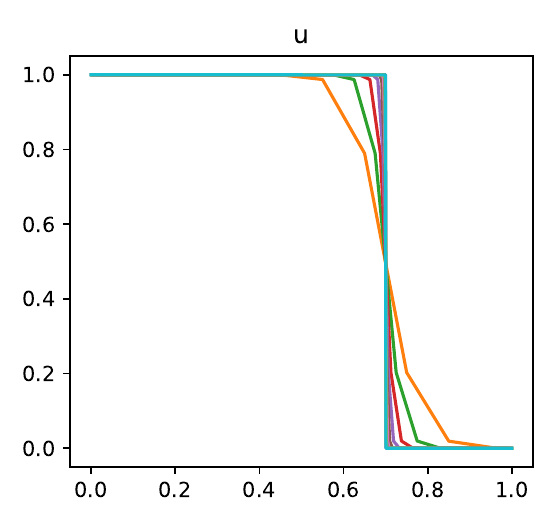}
	\includegraphics[width=0.095\textwidth, trim={268 30 10 0},clip]{Burgers_MPDeC5_comparison_exact_CFL1.pdf}\\
	\includegraphics[width=0.295\textwidth, trim={0 0 0 0}, clip]{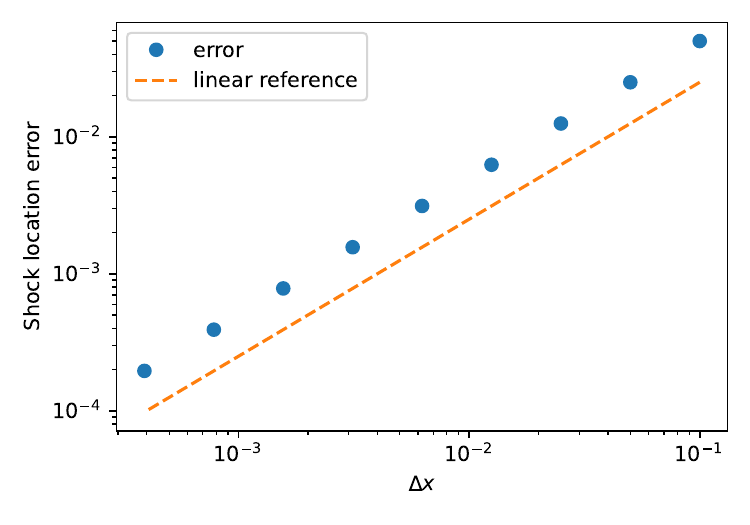}
	\includegraphics[width=0.295\textwidth, trim={0 0 0 0}, clip]{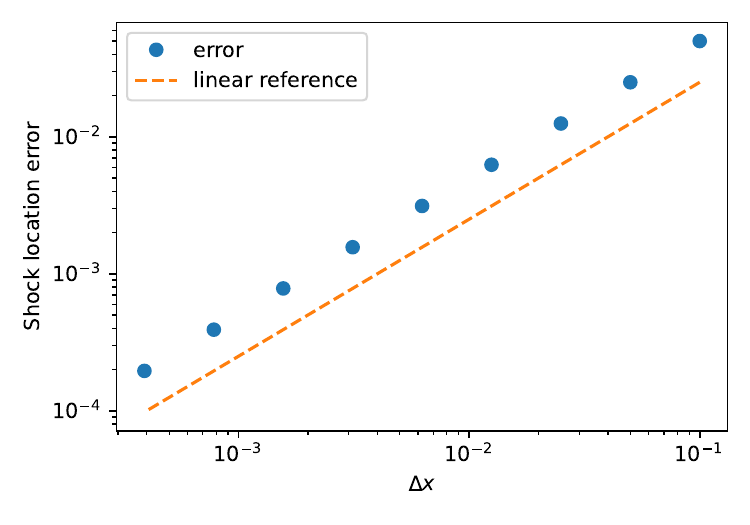}
	\includegraphics[width=0.295\textwidth, trim={0 0 0 0}, clip]{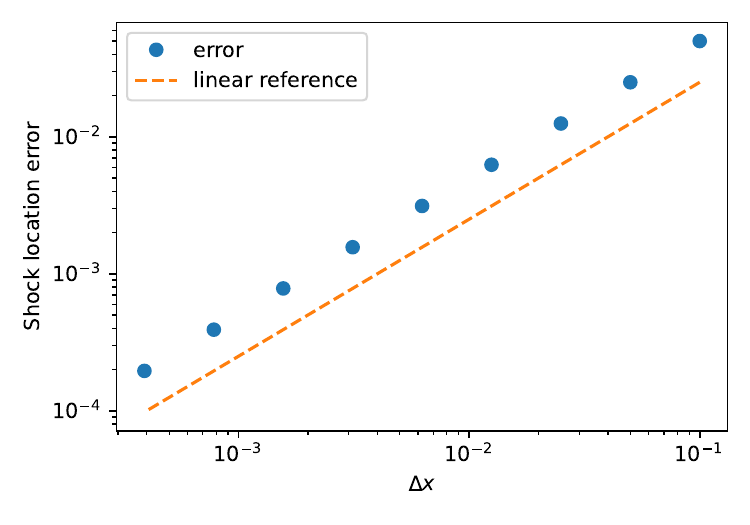}\hfill\\
\begin{center}    Shallow water equations \end{center}
	\includegraphics[width=0.295\textwidth, trim={0 0 440 0},clip]{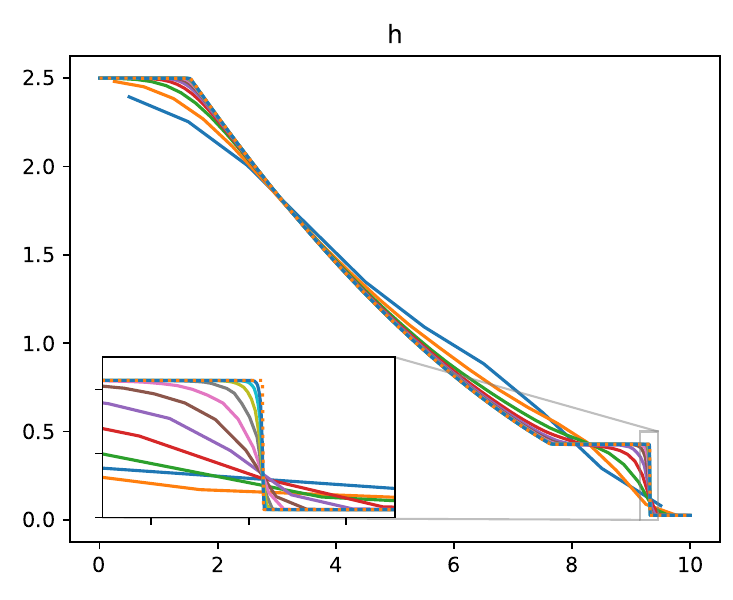}
	\includegraphics[width=0.295\textwidth, trim={0 0 440 0},clip]{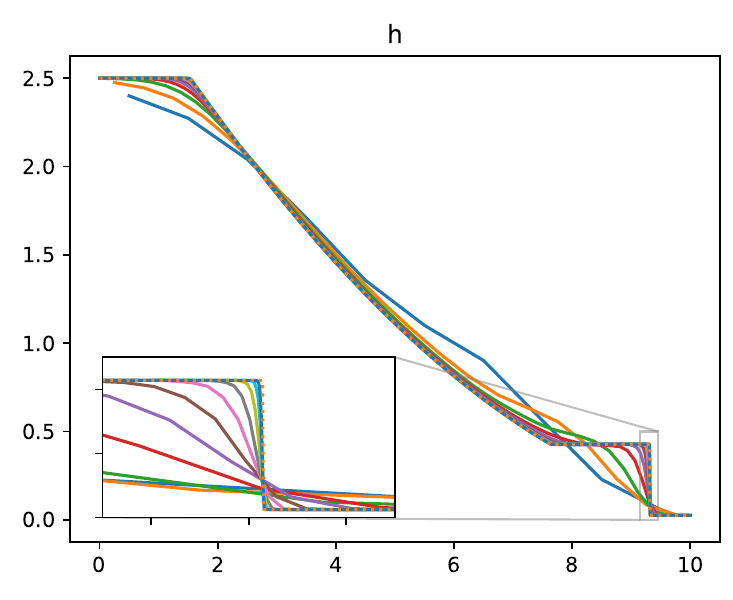}
	\includegraphics[width=0.295\textwidth, trim={0 0 440 0},clip]{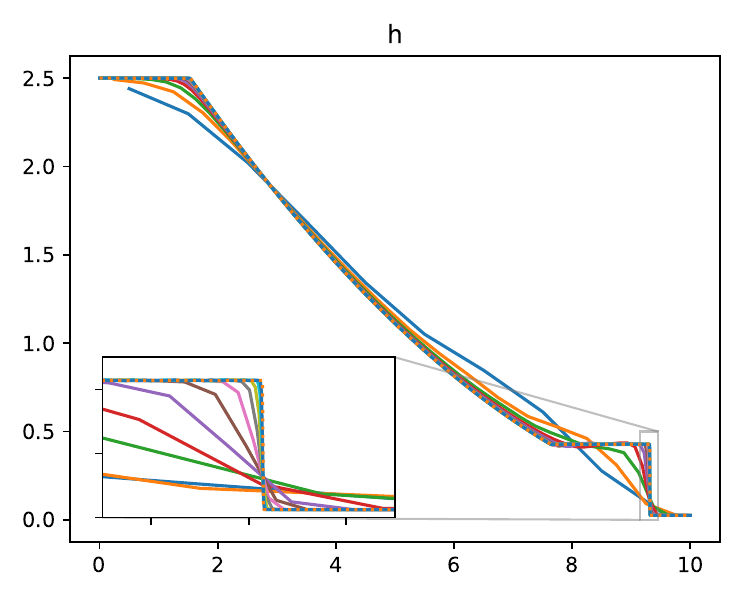}
	\includegraphics[width=0.095\textwidth, trim={690 40 0 0},clip]{SW_MPDeC5_comparison_exact_CFL1.pdf}\\
    \includegraphics[width=0.295\textwidth, trim={0 0 0 0}, clip]{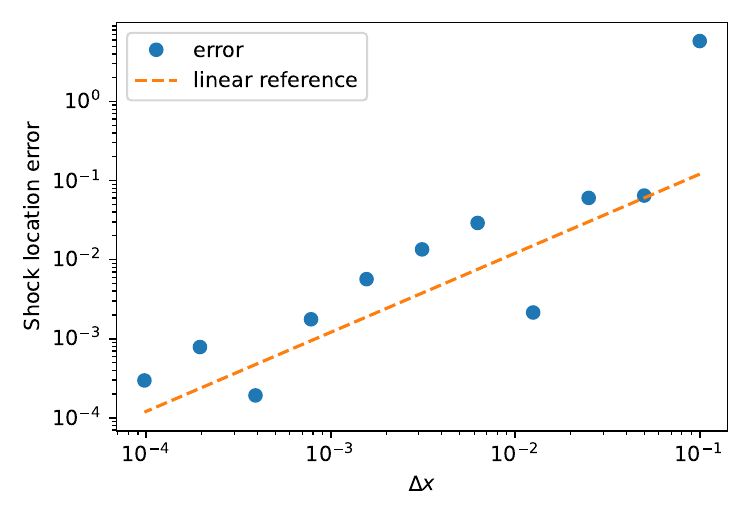}
	\includegraphics[width=0.295\textwidth, trim={0 0 0 0}, clip]{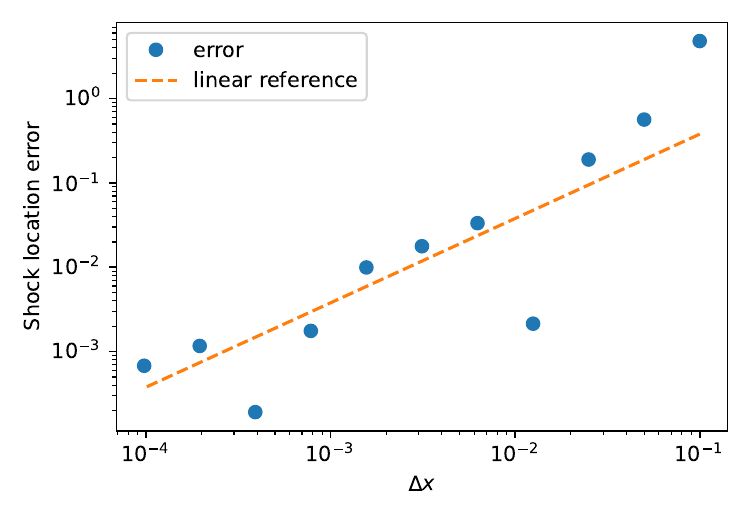}
	\includegraphics[width=0.295\textwidth, trim={0 0 0 0}, clip]{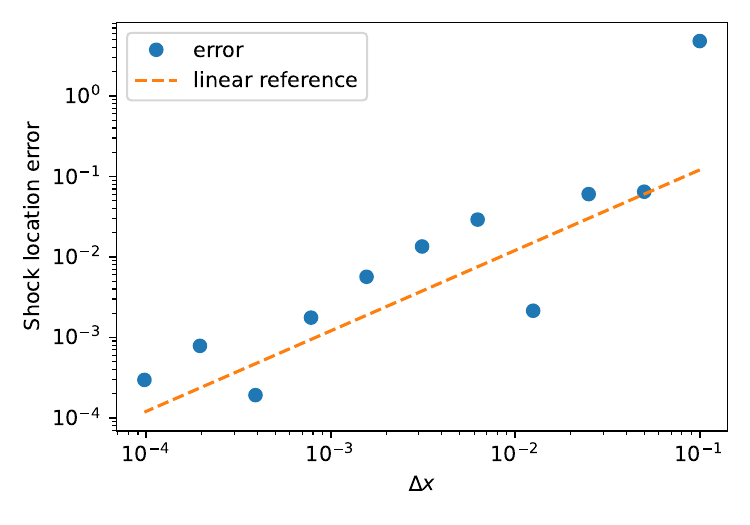}\hfill\\
	\caption{Burgers' (top) and SW dam break (bottom), solutions and shock location error refining the mesh: MPDeC2 (left),  MPSSPRK3 (center) and MPDeC5 (right)}\label{fig:tests}
\end{figure}

Finally, in this section we propose a space-time high order discretization which involves a FV WENO spatial reconstruction \cite{shu_efficient_1988} in each cell, with a positivity limiter \cite{perthame1996positivity} to guarantee the positivity in each reconstruction point. Typically these limiters are set to work with the explicit Euler method and SSPRK methods, where the CFL has to be adjusted according to the weights of the underlying quadrature, e.g. for order 5 one needs a CFL of $\frac{1}{12}$.
In the context of Patankar-type schemes, it has been shown that this restriction of the CFL is not required \cite{ciallella2022arbitrary}.

Here, we use the discretization proposed in \cite{ciallella2022arbitrary} where the FV-WENO discretization order matches the time integration order of accuracy.
As time discretization, we use various Patankar-type methods, i.e.\ MPDeC \cite{offner2020arbitrary} of various order and the optimal third order MP strong-stability-preserving Runge--Kutta method \cite{huang2019third} (MPSSPRK3).

Also here, we propose two tests for different conservation laws with discontinuities with a state close to 0, for Burgers' equations and shallow water (SW) equations. Differently from the previous test cases, in this code a minimum quantity for the positive variable is set at $10^{-14}$ to avoid blow-ups in the velocity computation (in SW). 

We start with the inviscid Burgers' equations $\partial_t u(x,t) +\partial_x (u^2(x,t)/2) =0$ for $u:[0,1]\times \mathbb R^+\to \mathbb R$  with IC and exact solution
\begin{equation*}
	u(x,0)=\begin{cases}
		1& x\leq 0.2,\\
		10^{-16}&\text{else,}
	\end{cases}\qquad 
	u(x,t) = \begin{cases}
		1& x\leq 0.2+\frac12 t,\\
		10^{-16}& \text{else}.
	\end{cases}
\end{equation*}
The solution of this problem is clearly positive, hence it is reasonable to apply the MP procedure on the variable $u$. In Fig.~\ref{fig:tests} (top), we show the simulations for various methods at time $T=1$ obtained with $\text{CFL}=0.99$. In all schemes, we see a convergence of the shock towards the exact solution, in accordance with Remark~\ref{rmk:RK} and, hence, with the shock traveling at the right speed. This is a symptom of a conservative method and consistent numerical fluxes.

We then proceed with a shallow water equation test of a slightly wet dam break 
\begin{equation}\label{eq:dam_break}
	\begin{cases}
		\partial_t h + \partial_x (hu) =0,\\
		\partial_t (hu) + \partial_x (hu^2+\frac{9.8}{2}h^2) =0,
	\end{cases}
	\qquad
	h(x,0)=\begin{cases}
		2.5&x\leq 5,\\
		0.025&x> 5,
	\end{cases}
	\quad u(x,0)=0.
\end{equation}
Clearly, for this test only $h$ is part of a positive production-destruction system and is put in an MP solver, while $hu$ is solved with the underlying explicit method.
The choice of a wet dam break is dictated by the fact that dry dam breaks do not show a discontinuity close to the dry area, but rather a rarefaction wave, making it impossible to validate the speed of the shock close to the dry region.
Moreover, we remark that, in particular in the initial times, the MP weights close to the shock are proportional to the ratio of the left and the right state of $h$, which in this case is 100, see \eqref{eq:dam_break}. 
We believe that this jump would already influence the behavior of the solution. Larger jumps would make the shock smaller and the numerical diffusion would smear out their behavior, making the validation of the result impossible.

In Fig.~\ref{fig:tests} (bottom), we  depict the simulation at time $T=0.7$ obtained with $\text{CFL} =0.99$ for different MP solvers next to the exact solutions as well as the numerical shock location error. Here, we also observe that the solutions converge to the exact one as the mesh is refined.

In~\cite{ciallella2022arbitrary,ciallella2025high}, other more challenging shallow water tests have been solved using the same approach with MPDeC. There, multiple shocks and their interactions in wet/dry regimes have been tested and in all those simulations the solutions showed reliable results in comparison with classical 
benchmarks.

\section{Conclusions and perspectives}\label{se_conclusion} 
We have extended the Lax-Wendroff theorem to Patankar-type temporal discretization of difference methods for conservation laws. The proof of the theorem required an additional hypothesis, with respect to the classical one, that bounds the total time variation of the numerical solutions uniformly in space and refinements. 
The numerical results show that, if convergent, the Patankar-type methods converge to the same weak solution of the explicit methods, and sometimes they converge also when the explicit methods fail at converging, bringing in some extra stabilization.

It would be interesting to extend this result, for example, proving that the extra hypothesis is not needed. In this context, ideas similar to \cite{birken_conservation_2022} could be explored, even if there is no guarantee that the two limits of linear solver iterations going to infinity and mesh refinements can be exchanged. 
Some of the authors are also working on defining an entropy-preserving version of Patankar-type methods. This will be of particular interest for the development of Patankar-type methods that guarantee the positivity of quantities that are not directly evolved by the conservation law but instead are derived by an additional equation of state. For instance, the pressure in the context of the compressible Euler equations would be positive if the entropy is dissipated along the simulation and the density is positive, see for example \cite{berthon2025towards}. 
This still requires some additional work and it is not a trivial modification of Patankar-type schemes for PDS.

\section*{Acknowledgements}
P.\,Ö. is supported by the DFG within SPP 2410, project  525866748 and under the personal grant 520756621. D.\,T. was supported by the Ateneo Sapienza projects 2023 “Modeling, numerical treatment of hyperbolic equations and optimal control problems”.

\noindent \section*{In memoriam}
 
\noindent This paper is dedicated to the memory of Prof.\ Arturo Hidalgo L\'opez
($^*$July 03\textsuperscript{rd} 1966 - $\dagger$August 26\textsuperscript{th} 2024) of the Universidad Politecnica de Madrid, organizer of HONOM 2019 and active participant in many other editions of HONOM.
Our thoughts and wishes go to his wife Lourdes and his sister Mar\'ia Jes\'us, whom he left behind.

\appendix
\section{Total Variation Diminishing property}\label{sec:TVD}

%
%

In this section, we will prove an interesting result for a very specific combination of space-time discretization on Burgers' equation. 
We use MPE as time discretization, an upwind numerical flux (being $u$ positive, this is quite straightforward) with the definition of PDS of \eqref{eq:prod_dest}. 
We can prove that this scheme is total variation diminishing (TVD) for an \textit{implicit} $\text{CFL} <2$.
\begin{thm}
	Consider the upwind method with the following PDS distribution
	\begin{equation}
		\partial_t U_i = - \frac{U_i^2-U_{i-1}^2}{2\Delta x}, \qquad \text{with } p_{i,i-1}=\frac{U_{i-1}^2}{2\Delta x}, \quad d_{i,i+1}=\frac{U_{i}^2}{2\Delta x}.
	\end{equation}
	The MPE scheme applied to this semidiscretization is TVD with the \textit{implicit }CFL condition
    \begin{equation}\label{eq:implicit_CFL}
        \frac{\Delta t^n}{\Delta x} U_i^{n+1} \leq 2 \qquad \forall i,
    \end{equation}
    for all times $n$ with $\Delta t ^n=t_{n+1}-t_n$.
\end{thm}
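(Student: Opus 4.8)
The plan is to reduce the scheme to a scalar incremental (Harten-type) recursion for the spatial jumps $U_i^{n+1}-U_{i-1}^{n+1}$ and then sum absolute values, with the implicit CFL condition entering exactly once, namely to keep one coefficient nonnegative. \textbf{Step 1 (explicit form of the update).} Since for this PDS only $p_{i,i-1}$ and $d_{i,i+1}$ are nonzero, specializing \eqref{eq:MPRK_final_update} to a single stage with $\sigma_i=U_i^n$ gives, with $\mu:=\frac{\Delta t^n}{2\Delta x}$,
\[
U_i^{n+1}=U_i^n+\mu\bigl(U_{i-1}^nU_{i-1}^{n+1}-U_i^nU_i^{n+1}\bigr),
\qquad\text{i.e.}\qquad
(1+\mu U_i^n)\,U_i^{n+1}=U_i^n+\mu U_{i-1}^nU_{i-1}^{n+1}.
\]
The system matrix is lower bidiagonal with strictly positive diagonal, so $\bU^{n+1}$ is well defined; by the unconditional positivity of the MPE scheme, $\bU^n>0$ implies $\bU^{n+1}>0$, so every factor appearing below is positive. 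I work on a periodic grid (equivalently, compactly supported perturbations of a constant state), so the sums below are finite and index shifts are legitimate.

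\textbf{Step 2 (incremental form and the role of the CFL).} The key algebraic step is to split the flux difference as
\[
U_{i-1}^nU_{i-1}^{n+1}-U_i^nU_i^{n+1}
=U_{i-1}^n\bigl(U_{i-1}^{n+1}-U_i^{n+1}\bigr)+U_i^{n+1}\bigl(U_{i-1}^n-U_i^n\bigr),
\]
so that, writing $v_i:=U_i^n-U_{i-1}^n$, $w_i:=U_i^{n+1}-U_{i-1}^{n+1}$, $a_i:=\mu U_i^{n+1}$, $b_i:=\mu U_i^n$, the scheme becomes $U_i^{n+1}=U_i^n-b_{i-1}w_i-a_iv_i$. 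Subtracting this identity at index $i$ from the one at index $i+1$ and isolating $w_{i+1}$ yields the recursion
\[
(1+b_i)\,w_{i+1}=(1-a_{i+1})\,v_{i+1}+a_i\,v_i+b_{i-1}\,w_i.
\]
The quantities $(1+b_i)^{-1}$, $a_i$, $b_{i-1}$ are automatically positive, while $1-a_{i+1}$ is nonnegative precisely when $a_{i+1}\le1$, i.e.\ $\frac{\Delta t^n}{\Delta x}U_{i+1}^{n+1}\le2$; this is the only place the implicit CFL condition \eqref{eq:implicit_CFL} is invoked.

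\textbf{Step 3 (summation and telescoping).} Taking absolute values in the recursion and using the triangle inequality (valid now that all coefficients are nonnegative),
\[
(1+b_i)\,|w_{i+1}|\le(1-a_{i+1})\,|v_{i+1}|+a_i\,|v_i|+b_{i-1}\,|w_i|.
\]
Summing over $i$: the left side equals $\sum_i|w_i|+\sum_i b_i|w_{i+1}|$, while on the right $\sum_i b_{i-1}|w_i|=\sum_i b_i|w_{i+1}|$ after the shift $i\mapsto i+1$, so the two $w$-weighted sums cancel, leaving $\sum_i|w_i|\le\sum_i(1-a_{i+1})|v_{i+1}|+\sum_i a_i|v_i|=\sum_i|v_i|$. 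Since $\sum_i|w_i|=\mathrm{TV}(\bU^{n+1})$ and $\sum_i|v_i|=\mathrm{TV}(\bU^n)$, this is the claimed TVD property.

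\textbf{Expected main obstacle.} The delicate point is Step 2: there are two natural ways to split $U_{i-1}^nU_{i-1}^{n+1}-U_i^nU_i^{n+1}$, and only the one above produces a recursion whose $w$-terms telescope exactly upon summation and whose coefficients are all nonnegative under \eqref{eq:implicit_CFL}; keeping the index shifts straight in the cancellation of Step 3 is the other spot that needs care. A tempting shortcut — observing that the bidiagonal matrix has a nonnegative inverse — does not by itself suffice, because here consistency with constant states is a nonlinear identity rather than a row-sum condition on a fixed matrix.
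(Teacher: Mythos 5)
Your proof is correct and follows essentially the same route as the paper: write the MPE update in incremental form, subtract consecutive values, move the implicit jump to the left-hand side, use the implicit CFL bound to keep the coefficient of the spatial jump nonnegative, then take absolute values, sum, and telescope. The only difference is the mirror-image splitting of the product difference --- you write $U_i^nU_i^{n+1}-U_{i-1}^nU_{i-1}^{n+1}=U_{i-1}^n\,w_i+U_i^{n+1}\,v_i$ while the paper uses $U_i^n\,w_i+U_{i-1}^{n+1}\,v_i$; both choices telescope and both yield a coefficient controlled by \eqref{eq:implicit_CFL}, so your closing remark that only one of the two splits works is not accurate, though this does not affect the validity of your argument.
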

\begin{proof}
	Let us first write down explicitly the MPE scheme applied to the previous semidiscretization:
	\begin{align}
		U^{n+1}_i &= U^n_i - \frac{\Delta t}{2\Delta x}\left( (U_i^n)^2 \frac{U_i^{n+1}}{U_i^n}-(U_{i-1}^n)^2 \frac{U_{i-1}^{n+1}}{U_{i-1}^n} \right)\\
		&=U^n_i - \frac{\Delta t}{2\Delta x}\left( U_i^n U_i^{n+1}- U_{i-1}^n U_{i-1}^{n+1}\right) =U^n_i - \frac{\Delta t}{2\Delta x}\left( U_i^n (U_i^{n+1}-U_{i-1}^{n+1})+ (U^n_i-U_{i-1}^n) U_{i-1}^{n+1}\right) .
	\end{align}
	Now, applying the difference of two consecutive values, we obtain
	\begin{align}
    \begin{split}
	&U^{n+1}_i-U^{n+1}_{i-1} =U^n_i -U^n_{i-1}\\
        &- \frac{\Delta t}{2\Delta x}\left( U_i^n (U_i^{n+1}-U_{i-1}^{n+1})+ (U^n_i-U_{i-1}^n) U_{i-1}^{n+1}  
		-U_{i-1}^n (U_{i-1}^{n+1}-U_{i-2}^{n+1})- (U^n_{i-1}-U_{i-2}^n) U_{i-2}^{n+1}\right)
    \end{split}
	\end{align}
	and bringing on the left hand side the implicit term that we can collect, we obtain
	\begin{align}
    \begin{split}
		&(U^{n+1}_i-U^{n+1}_{i-1})\left(1+\frac{\Delta t}{2\Delta x}U^n_i\right) \\
        =&(U^n_i -U^n_{i-1})\left(1-\frac{\Delta t}{2\Delta x}U^{n+1}_{i-1}\right)+ \frac{\Delta t}{2\Delta x}\left( 
		U_{i-1}^n (U_{i-1}^{n+1}-U_{i-2}^{n+1})+(U^n_{i-1}-U_{i-2}^n) U_{i-2}^{n+1}\right).
    \end{split}
	\end{align}
	Now, using \eqref{eq:implicit_CFL} and the positivity of the solution, 
	we obtain	
	\begin{align}
		\begin{split}
			\sum_i |U^{n+1}_i-U^{n+1}_{i-1}|&\left(1+\frac{\Delta t}{2\Delta x}U^n_i\right) \leq\sum_i|U^n_i -U^n_{i-1}|\left(1-\frac{\Delta t}{2\Delta x}U^{n+1}_{i-1}\right)\\
			&+ \sum_i\frac{\Delta t}{2\Delta x}
			U_{i-1}^n |U_{i-1}^{n+1}-U_{i-2}^{n+1}|+ \sum_i\frac{\Delta t}{2\Delta x}|U^n_{i-1}-U_{i-2}^n|U_{i-2}^{n+1}
		\end{split}\\
		\begin{split}
			\sum_i |U^{n+1}_i-U^{n+1}_{i-1}| &\left(1+\frac{\Delta t}{2\Delta x}U^n_i -\frac{\Delta t}{2\Delta x}
			U_{i}^n \right)  \leq\sum_i|U^n_i -U^n_{i-1}|\left(1-\frac{\Delta t}{2\Delta x}U^{n+1}_{i-1}+ \frac{\Delta t}{2\Delta x}U_{i-1}^{n+1} \right)
		\end{split}\\
		\Longleftrightarrow TV(U^{n+1})&=\sum_i |U^{n+1}_i-U^{n+1}_{i-1}|\leq \sum_i|U^n_i -U^n_{i-1}|=TV(U^{n}).
	\end{align}
\end{proof}
It is not so easy to use the same trick for other schemes and systems as the add-and-subtract trick we did was tailored for Burgers' equation and those Patankar weights. For more complex problems and schemes one would have further terms that are not so easy to handle.
\begin{rmk}[On the implicit CFL condition]
    The implicit CFL condition \eqref{eq:implicit_CFL} is not the classical CFL condition imposed using the values of the solution at the previous time step, but it rather implies knowing the solution at the next time step, which is not possible. Nevertheless, from an experimental point of view, we have observed that also imposing the classical explicit CFL condition with CFL value equal to 2 is sharply guaranteeing the TVD property.
\end{rmk}
\subsection{Numerical results}
To test the TVD property, we run some simulations for the Burgers' equation with a double Riemann problem \eqref{eq:RP} as initial condition with different values for $u_1$ and $u_2$. The numerical findings are independent on these values. We show the simulations for $u_1=2$ an $u_2=10^{-13}$ with $N=100$ discretization points and periodic boundary conditions.
\begin{figure}
    \centering
    \begin{minipage}{0.49\textwidth}
        \includegraphics[width=0.99\linewidth]{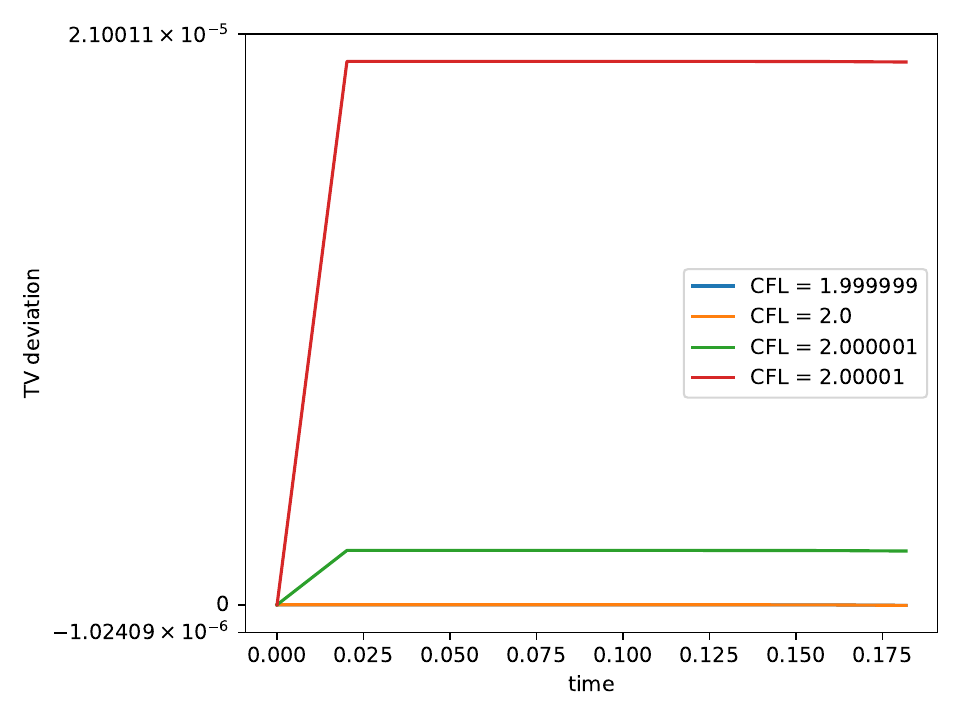}
    \end{minipage} \hfill
    \begin{minipage}{0.49\textwidth}
    \centering
        \begin{tabular}{c|c}
        CFL & $\max_n \text{TV}(\mathbf U(t^n))-\text{TV}(\mathbf U(t^{n-1}))$ \\\hline
        1.999999 & -3.55e-15\\
        2 & -3.99e-15\\
        2.000001 & 1.99e-06\\
        2.00001 & 1.99e-05
    \end{tabular}
    \end{minipage}
    \caption{Deviation of the total variation from the initial total variation (left) and maximum variation of TV (right) for different CFL numbers on a Burgers' double Riemann Problem test with MPE and upwind}
    \label{fig:TV_deviation}
\end{figure}
We test for different explicit CFL numbers close to the critical value 2 to check if the found condition is valid and sharp, i.e., $\frac{\Delta t^n}{\Delta x} \max_i{U_i^n}\leq \text{CFL}$. In figure~\ref{fig:TV_deviation}, we plot the difference between the TV at a given time and the TV at the initial time. We clearly see that for $\text{CFL}\leq 2$ the simulations are TVD while as soon as we cross the value 2, the simulations are not TVD.
Moreover, the maximum of value of the difference between the TV at a time step and at the previous time step is reported in the right of Figure~\ref{fig:TV_deviation} and confirms once again that even imposing the CFL at an explicit level, we sharply obtain the TVD property for $\text{CFL}\leq 2$.

Numerically, we have observed that other high order schemes do not share the same property. For example, we could not find any CFL for which MPDeC of order 3 and 4 would be TVD, while for MPDeC of order 2 we obtained numerically the same CFL bound as MPE. \\

During the preparation of this work the authors used ChatGPT (OpenAI) in order to refer to the compactness of functions of bounded variation (Lemma \ref{lem:delta}). After using this tool/service, the author(s) reviewed and edited the content as needed and take full responsibility for the content of the published article.

\bibliographystyle{siam}
\bibliography{literature}

\end{document}